\documentclass[12pt]{article}
\usepackage{latexsym, amsfonts, amscd, amssymb, verbatim, amsmath, enumerate}
\usepackage[colorlinks=true,linkcolor=magenta,citecolor=blue]{hyperref}

\setlength{\textheight}{9.5in}%
\setlength{\textwidth}{6.3in}%
\setlength{\topmargin}{-0.85in}%
\setlength{\oddsidemargin}{0.15in}%

\newtheorem{theorem}{Theorem}[section]

\newtheorem{lemma}{Lemma}[section]

\newtheorem{example}{Example}[section]

\newcommand{\proofbox}{\hspace{\fill}{$\Box$}}
\newenvironment{proof}{\textbf{Proof}.}{\proofbox}

\date{}

\def\emptyset{\mbox{{\rm \O}}}
\def\bar{\overline}

\date{30/12/2020}

\begin{document}

\title{ Upper semicontinuity of the solution map to a parametric boundary optimal control problem with unbounded constraint sets}

\author{
Nguyen Hai Son \footnote{\textit{Corresponding author}; School of Applied Mathematics and Informatics, Hanoi University of Science and Technology, No.1 Dai Co Viet, Hanoi, Vietnam; Email: son.nguyenhai1@hust.edu.vn} \ and Tuan Anh Dao \footnote{School of Applied Mathematics and Informatics, Hanoi University of Science and Technology, No.1 Dai Co Viet road, Hanoi, Vietnam; 
	Institute of Mathematics, Vietnam Academy of Science and Technology, No.18 Hoang Quoc Viet road, Hanoi, Vietnam; Email: anh.daotuan@hust.edu.vn}}
\maketitle

\medskip

\noindent {\bf Abstract.} {\small We would like to study the solution stability of a parametric control problem governed by semilinear elliptic equations with a mixed state-control constraint, where the cost function is nonconvex and the admissible set is unbounded. The main goal of this paper is to give some sufficient conditions under which the solution map is upper semicontinuous and continuous with respect to parameters.}

\medskip

\noindent {\bf Key words.} Parametric optimal control, solution stability, upper semicontinuity, boundary control, mixed pointwise constraint.

\noindent {\bf AMS Subject Classifications.} 49K20 $\cdot$ 49K40 $\cdot$ 90C31.

\section{Introduction}

In this paper, let us consider the parametric optimal control problem as follows:

Let $\Omega$ be a bounded domain in $\mathbb{R}^2$ with the boundary $\Gamma$ of class $C^{1,1}$. We need to find a control function $u \in L^2(\Gamma)$ and a corresponding state function $y\in H^1(\Omega) \cap C(\bar \Omega)$, which 
\begin{align}
\text{minimize}\  J(y, u, \mu):=\int_\Omega L(x, y(x),\mu^{(1)}(x))dx + \int_{\Gamma} (\ell(x,y,\mu^{(2)})+\varphi(\mu^{(2)})u^2(x))d\sigma , \label{P1}
\end{align}
subject to
\begin{align}
&\begin{cases}
Ay  + f(x,y)= 0  \quad &{\rm in}\ \Omega,\\
\partial_{n_A} y = u +\lambda^{(1)} \quad  &{\rm on}\ \Gamma,
\end{cases} \label{P2} \\
& g(x,y)+u(x)+ \lambda^{(2)} \leq 0\ {\rm a.a.}\ x\in\Gamma,
\label{P3}
\end{align}
where $L: \Omega \times \mathbb R \times \mathbb R  \to \mathbb R$, $\ell :
\Gamma \times \mathbb R \times \mathbb R \times \mathbb R \to \mathbb R$,
$f: \Omega \times \mathbb R \to \mathbb R$ and $g: \Gamma \times \mathbb R \to \mathbb R$ are functions, $(\mu, \lambda) \in (L^\infty(\Omega)\times L^\infty(\Gamma)) \times (L^2(\Gamma))^2$ is a vector of parameters
 with $\mu=(\mu^{(1)}, \mu^{(2)})$ and $\lambda=(\lambda^{(1)}, \lambda^{(2)})$.
The second-order elliptic operator $A$ is determined by the form
$$ Ay(x)=-\sum_{i,j=1}^2 D_j(a_{ij}(x)D_iy(x))+a_0(x)y(x),  $$ and 
$$
 \partial_{n_A} y(x) = \sum_{i,j=1}^2 a_{ij}(x)D_iy(x)\nu_j(x),
$$
in which $\nu(x)=(\nu_1(x),\nu_2(x))$ denotes the unit outward normal vector to $\Gamma$ at the point $x$. As usual, the measure on the boundary $\Gamma$ is the $1$-dimensional measure induced by the parameterization. 

For simplicity, let us put
$$Y:=H^1(\Omega) \cap C(\bar \Omega),\ U:=L^2(\Gamma),\ \Pi:=L^\infty(\Omega)\times L^\infty(\Gamma),\ \Lambda:=(L^2(\Gamma))^2$$
and for each $\lambda \in \Lambda$, we define
\begin{align*} 
K(\lambda)= \big\{ (y,u)\in Y \times U : \, \eqref{P2}\ \text{and} \ \eqref{P3}\ \text{are satisfied}\big\}.
\end{align*}
Then, the problem \eqref{P1}-\eqref{P3} can be rewritten in the following form:
\begin{align*} 
P(\mu, \lambda)  \quad \begin{cases}
J(y,u, \mu) \to \text{inf}, \\
(y,u)\in K(\lambda).
\end{cases}
\end{align*}

Throughout this paper, we denote by $S(\mu,\lambda)$ the solution set of problem $P(\mu,\lambda)$ corresponding to the point $(\mu,\lambda)$. Furthermore, $(\bar \mu, \bar \lambda)$ and $P(\bar \mu, \bar \lambda)$ stand for the reference point and the unperturbed problem, respectively.

We are interested in investigating the behavior of $S(\mu, \lambda)$ when $(\mu, \lambda)$ varies
around $(\bar\mu, \bar\lambda)$. As we can see, the stability of solution map for nonlinear optimal control problems plays an important role in parametric estimation problems. In particular, it  guarantees that the solution set of perturbed problems is not so far away from that of the unperturbed problems. In the last decade, there have been numerous results in terms of the study of solution stability for parametric optimal control problems governed by ordinary differential equations and partial differential equations. For example, we want to refer the interested readers to \cite{Alt, Dontchev,Griesse,Kien1, KTWY,Kien2, KY,Malanowski1,  Malanowski2,Malanowski3,Malanowski4,Malanowski5, Malanowski6, Nhu1, Son1, Son2} and the references therein to recognize the close connections to our problem.

It is known that if the objective function $J( \cdot, \cdot, \mu)$ is strongly convex for all $\mu$ and the admissible set $K(\lambda)$ is convex, then we may claim that the solution set $S(\mu, \lambda)$ is singleton (see more \cite{Alt}, \cite{Dontchev}, \cite{Griesse}). Moreover, in this case Dontchev \cite{Dontchev} and Malanowski \cite{Malanowski1}-\cite{Malanowski6} showed that under some certain conditions, the solution map is Lipschitz continuous with respect to parameters. Inversely, when above conditions are invalid, the solution map may not be single-valued (see, for instance, \cite[Example 4.1]{Kien1} and \cite[Example 4.2]{Son1} ). Later, thanks to using tools of set-valued analysis and variational analysis, Kien et al. \cite{Kien1, KTWY} succeeded to obtain the lower semicontinuity of the solution map to a parametric optimal control problem for the case, where the objective function $J( \cdot, \cdot, \mu)$ is convex in both the variables and the admissible set $K(\lambda)$ are also convex. More recently, such kind of these results have been extended in \cite{KY, Son1, Son2} when one wants to consider that the objective function may not be convex in both the variables and the admissible set is not convex. Namely, Kien et al. \cite{KY} have studied the problem governed by ordinary differential equations, meanwhile the recent two papers of Son and his collaborator \cite{Son1, Son2} are devoted to the investigation of parametric elliptic optimal control problems. More precisely, they have obtained some results on  the upper semicontinuity and continuity of the solution map. Nevertheless, in the latter papers \cite{Son1,Son2}, the considered constraint is controlled by the form  
\begin{align} \label{constraint3} a(x)\leq g(x,y) +u(x)+\lambda^{(2)}\leq b(x), \quad  \forall x\in \Gamma.
	\end{align}
As a consequence, $K(\lambda_n)$ is uniform bounded for parameters $\lambda_n$. This implies that the admissible set $K(\lambda)$ has the following property: {\it If $\{\lambda_n \}$ is a sequence converging strongly to $\hat \lambda$ in $\Lambda$ then for any sequence $\{(y_n, u_n)\}$, $(y_n, u_n)\in K(\lambda_n)$, there exist a subsequence $\{(y_{n_k}, u_{n_k})\}$ and $(\hat y, \hat u) \in K(\hat \lambda)$ such that $ y_{n_k}$ converges strongly to $\hat y$ and $u_{n_k}$ converges weakly to $\hat u$}" (see more in detail \cite[Lemma 2.3]{Son1} and \cite[Lemma 3.5]{Son2}), which comes into play in the proof of the main theorems there. However, the following example will indicate that the property is no longer valid if the constraint \eqref{constraint3} is replaced by the constraint \eqref{P3}.

\begin{example} {\rm Let $\Omega:=\{x=(x_1,x_2)\in \mathbb{R}^2: \ x^2_1+x^2_2< 1\}$. We consider the state equation as follows:
	\begin{align}
		&\begin{cases}
			-\Delta y  + y+y^3= 0  \quad &{\rm in}\ \Omega,\\
			\partial_{n} y = u +\lambda^{(1)} \quad  &{\rm on}\ \Gamma,
		\end{cases} \label{VD1} \\
		& u(x)+ \lambda^{(2)} \leq 0\ {\rm a.a.}\ x\in\Gamma. 
		\notag
	\end{align}
For any positive integers $n$, we take $\lambda_n = \hat \lambda=(0;0) $, $u_n=-n$, and assume that $y_n$ is a unique solution to  the equation \eqref{VD1} corresponding to $u_n$ and $\lambda_n$. Then, we may conclude that $\lambda_n$ converges strongly to $ \hat \lambda$ in $\Lambda$ and $(y_n, u_n)\in K(\lambda_n)$. Moreover, it holds $\lim_{n\to \infty}\|u_n\|_U=\lim_{n\to \infty}(n\sqrt{|\Gamma|})=\infty $, where $|\Gamma|$ is the volume of $\Gamma$. Hence, ${u_n}$ is unbounded in $U$ and so every subsequence of ${u_n}$ is also unbounded. If there exists a subsequence $\{u_{n_k}\}$ of $\{u_n\}$ such that $u_{n_k}$ converges weakly to $\hat u$, then $\{u_{n_k}\}$ is bounded in $U$, which leads to a contradiction. Consequently, all subsequences of $\{(y_{n},u_{n})\}$ should be not weakly convergent in $Y\times U$.}   
\end{example}

In order to overcome this difficulty as well as establish the upper semicontinuity and the continuity of the solution map $S(\mu,\lambda)$ to the problem \eqref{P1}-\eqref{P3}, our main goal of this paper is to develop the method used in \cite{KY}. For this purpose, we need to impose the coercive condition for the objective function (see the assumption (A2), later). The essential point to emphasize here is that the obtained results in \cite{KY} are of the problem subject to control constraints without these parameters. The other thing worthy of mentioning is the use of a common control variable for all parameters as we may realize in \cite{KY}, in particular, the assumption (H6) and the proof of Theorem 2.1. From this observation, we state that the previous approaches seem invalid in our interest of the present problem \eqref{P1}-\eqref{P3}.  

\textbf{The organization of our paper is as follows:} Assumptions and statement of main results are presented in Section \ref{Sec.2}. Section \ref{Sec.3} is devoted to some auxiliary results, consisting of the existence, uniqueness and convergence of solution to the Robin problems and some properties of the admissible set $K(\lambda)$. Finally, the proof of the main result is provided in Section \ref{Sec.4}.

\section{Assumptions and statement of main result} \label{Sec.2}

In this section, let us impose some assumptions for the problem \eqref{P1}-\eqref{P3}.
Fix $(\bar \mu, \bar \lambda) \in \Pi \times \Lambda$ and a constant
$\epsilon_0 >0$. Notation $h_z$ stands for the derivative with respect to $z$
of a given function $h$.

\noindent (\textbf{A1}) $\,\,L:\Omega\times \mathbb R \times \mathbb R \to
\mathbb R$ and $\ell:\Gamma \times \mathbb R\times \mathbb R \times
\mathbb R\to \mathbb R$ are Carath\'{e}odory functions such that $y
\mapsto L(x,y,\mu^{(1)})$ and $(y,u)\mapsto \ell(x',y,u,\mu^{(2)})$ are
Fr\'{e}chet continuous differentiable functions for a.a. $x\in \Omega$
and $x'\in \Gamma$, respectively, and for all $\mu^{(1)},
\mu^{(2)}\in \mathbb{R}$ with $| \mu^{(1)}-\bar \mu^{(1)}(x)|+|
\mu^{(2)}-\bar \mu^{(2)}(x')| \leq \epsilon_0$. Furthermore, for
each $M>0$ there exist positive numbers $C_{LM}$, $C_{\ell M}$ and
functions $r_{1M} \in L^1(\Omega)$, $r_{2M} \in L^1(\Gamma)$,
$r_{3M} \in L^\infty(\Omega)$, $r_{4M} \in L^\infty(\Gamma)$ such
that
\begin{align*}
&| L(x, y,\mu^{(1)}) | \leq r_{1M}(x),& &| \ell(x', y,\mu^{(2)}) | \leq r_{2M}(x'), \\
&|L_y(x,y,\mu^{(1)})|\leq  r_{3M}(x),& &|L_y(x,y_1, \mu^{(1)})-L_y(x,y_2, \mu^{(1)})|\leq C_{LM}|y_1-y_2|,
\end{align*}
and
\begin{align*} 
|\ell_y(x',y,\mu^{(2)})|\leq r_{4M}(x'),\quad |\ell_y(x', y_1, \mu^{(2)})- \ell_y(x', y_2, \mu^{(2)})| \leq C_{\ell M}|y_1-y_2|,
\end{align*}
for a.a. $x \in \Omega$, $x' \in \Gamma$, for
all $\mu^{(1)},\ \mu^{(2)}$, $y,y_i\in\mathbb R$ satisfying
$|\mu^{(1)}-\bar \mu^{(1)}(x)|+| \mu^{(2)}-\bar \mu^{(2)}(x')| \leq \epsilon_0 $
and $|y|,|y_i| \leq M$ with $ i=1,2$.

\noindent (\textbf{A2}) $\,\,$There exist functions $r_{1} \in L^1(\Omega)$, $r_{2} \in L^1(\Gamma)$ to verify
 $$r_1(x)\leq L(x,y,\mu^{(1)}), \quad r_2(x')\leq \ell(x',y,\mu^{(2)})$$
 for a.a. $x \in \Omega$, $x' \in \Gamma$, for 
 all $\mu^{(1)},\ \mu^{(2)}$ satisfying
$|\mu^{(1)}-\bar \mu^{(1)}(x)|+| \mu^{(2)}-\bar \mu^{(2)}(x')| \leq \epsilon_0. $

\noindent (\textbf{A3}) $\,\,$The function $\varphi: \mathbb R \to \mathbb R$ is continuous and $\varphi(t)>\gamma >0$ for all $t\in \mathbb R$. Moreover, there exist constants $\theta>0$ and $k_\varphi >0$ so that it holds
$$|\varphi(t)-\varphi(s)|\leq k_\varphi |t-s|^\theta, \quad \forall t, s \in \mathbb R.$$

\noindent (\textbf{A4}) $\,\,$The coefficients $a_{ij} \in C^{0,1}(\bar \Omega)$ satisfy $a_{ij}(x)=a_{ji}(x)$. We assume that $a_0 \in L^\infty(\Omega),\ a_0(x)\geq 0$ for a.a. $x\in \Omega$, and $a_0 >0$ on a positive set. In addition, there exists $m_0 >0$ satisfying
$$ m_0 \Vert \xi \Vert^2 \leq \sum_{i,j=1}^2 a_{ij}\xi_i\xi_j, \  \ \forall \xi=(\xi_1, \xi_2) \in \mathbb{R}^2  \quad \text{for a.a.} \quad x\in \Omega. $$

\noindent (\textbf{A5}) $\,\,f: \Omega \times \mathbb R \to \mathbb R$ and $g: \Gamma \times \mathbb R \to \mathbb R$ are  Carath\'{e}odory functions of class $C^1$ with respect to the second variable and satisfy the following properties:
\begin{align*}
&f(\cdot, 0)=0, \ f_{y}(x, y)\geq 0\quad {\rm a.a.}\ x\in\Omega, \\
&g(\cdot, 0)=0, \ g_{y}(x', y)\geq 0\quad {\rm a.a.}\ x' \in \Gamma.
\end{align*}
Moreover, for each $M>0$ there exist two constants $C_{fM}, C_{gM} >0$ such that
\begin{align*}
&\big|f_y(x, y)\big| \leq C_{fM},& &\big|f_{y}(x, y_1)-f_{y}(x, y_2)\big| \leq C_{fM}|y_1-y_2|,  \\
&\big|g_y(x', y)\big| \leq C_{gM},& &\big|g_{y}(x', y_1)-g_{y}(x', y_2)\big| \leq C_{gM}|y_1-y_2|,
\end{align*}for a.a. $x \in \Omega$, $x' \in \Gamma$ and for all $y, y_1, y_2 \in \mathbb{R}$ fulfilling $|y|, |y_1|,|y_2|\leq M.$

Notice that assumption (A1) ensures that $J(\cdot, \cdot,\mu)$ is of class $C^1$ for all $\mu \in \Pi$ satisfying $\|\mu-\bar \mu\|_{\Pi}\leq \epsilon_0$. The assumptions (A2) and (A3) guarantees that not only $J(\cdot, \cdot,\cdot)$ is weakly lower semicontinuous but also every sequence of optimal controls has a strongly convergent subsequence. Finally, under the assumptions (A4)-(A5), the equation \eqref{P2} has a unique solution $y \in Y$ for each $u \in U$ and $\lambda \in \Lambda$.
		
Recall that a multifunction $F: E_1
\rightrightarrows E_2$ between topological spaces is said to be {\it
	lower semicontinuous} at $v_0 \in E_1$ if for any open set $W$ in
$E_2$ satisfying $F(v_0) \cap W \neq \emptyset$, there exists a
neighborhood $V_0$ of $v_0$ such that $F(v) \cap W \neq \emptyset$
for all $v \in V_0$. The multifunction $F$ is said to be {\it upper semicontinuous}
at $v_0 \in E_1$ if for any open set $W$ in $E_2$ satisfying
$F(v_0) \subset W $, there exists a neighborhood $V_0$ of $v_0$
such that $F(v) \subset W$ for all $v \in V_0$. If $F$ is both
lower semicontinuous and upper semicontinuous at $v_0$ then $F$
is said to be {\it continuous} at $v_0$ (see \cite{Aubin1}).

We can now formulate the main result.
\begin{theorem} \label{Theo1}
Suppose that the assumptions (A1)-(A5) are fulfilled. Then, the following assertions are valid:
\begin{itemize}
\item[{\rm (i)}] $S(\mu, \lambda) \neq \emptyset$ for all $(\mu, \lambda) \in \Pi \times \Lambda$;
\item[{\rm (ii)}] $S: \Pi \times \Lambda \to Y \times U$ is upper semicontinuous at $(\bar \mu, \bar \lambda)$.

Moreover, if $S(\bar \mu, \bar \lambda)$ is singleton, then $S(\cdot, \cdot)$ is continuous at $(\bar \mu, \bar \lambda)$.
\end{itemize}
\end{theorem}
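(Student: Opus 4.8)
The plan is to follow the classical direct-method-plus-coercivity strategy, working separately on the three claims. For part (i), I would first fix $(\mu,\lambda)$, take a minimizing sequence $\{(y_k,u_k)\}\subset K(\lambda)$ for $J(\cdot,\cdot,\mu)$, and use coercivity to extract a bounded subsequence: since by (A2) the integrals $\int_\Omega L$ and $\int_\Gamma \ell$ are bounded below by $\|r_1\|_{L^1}+\|r_2\|_{L^1}$, a minimizing sequence forces $\int_\Gamma \varphi(\mu^{(2)})u_k^2\,d\sigma$ to be bounded, and (A3) ($\varphi>\gamma>0$) then gives $\|u_k\|_U$ bounded. Passing to a weakly convergent subsequence $u_k\rightharpoonup \hat u$ in $U$, the auxiliary results of Section~\ref{Sec.3} on the Robin problem \eqref{P2} (continuous/compact dependence of the state on the control, under (A4)--(A5)) yield $y_k\to\hat y$ strongly in $Y$ with $(\hat y,\hat u)$ solving \eqref{P2}; the constraint \eqref{P3}, being of the form ``$\le 0$'' with $g$ continuous in $y$, is preserved under this mode of convergence (weak-$U$/strong-$Y$), so $(\hat y,\hat u)\in K(\lambda)$. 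Finally weak lower semicontinuity of $J(\cdot,\cdot,\mu)$ — convexity in $u$ via the quadratic term, plus strong convergence of $y_k$ to handle the (nonconvex) $L$ and $\ell$ terms through (A1) — shows $(\hat y,\hat u)\in S(\mu,\lambda)$.

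For part (ii), I argue by contradiction against upper semicontinuity: suppose there is an open $W\supset S(\bar\mu,\bar\lambda)$ and sequences $(\mu_n,\lambda_n)\to(\bar\mu,\bar\lambda)$ and $(y_n,u_n)\in S(\mu_n,\lambda_n)$ with $(y_n,u_n)\notin W$. The crucial step is a uniform bound on $\|u_n\|_U$: pick any fixed feasible pair $(\tilde y,\tilde u)\in K(\bar\lambda)$ and perturb it to a feasible pair for $\lambda_n$ (this is where the structure of \eqref{P3} matters — shifting $\tilde u$ by $\bar\lambda^{(2)}-\lambda_n^{(2)}$ restores feasibility, and adjusting the Robin data by $\bar\lambda^{(1)}-\lambda_n^{(1)}$ keeps the state equation solved), giving $J(y_n,u_n,\mu_n)\le J(y_n',u_n',\mu_n)$ with the right-hand side bounded as $n\to\infty$ by (A1); then coercivity via (A2)--(A3) bounds $\|u_n\|_U$ exactly as in part (i). Extract $u_n\rightharpoonup \hat u$, get $y_n\to\hat y$ strongly from the Robin convergence result, check $(\hat y,\hat u)\in K(\bar\lambda)$ as before, and then show $(\hat y,\hat u)\in S(\bar\mu,\bar\lambda)$ by comparing $J(\hat y,\hat u,\bar\mu)\le \liminf J(y_n,u_n,\mu_n)\le \limsup J(y_n,u_n,\mu_n)\le J(y,u,\bar\mu)$ for arbitrary $(y,u)\in K(\bar\lambda)$, using (A1) for joint continuity of $J$ in all arguments and the feasible-perturbation trick again for the upper estimate. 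Since $(\hat y,\hat u)\in S(\bar\mu,\bar\lambda)\subset W$ and $(y_n,u_n)\to(\hat y,\hat u)$ in $Y\times U$, eventually $(y_n,u_n)\in W$, a contradiction.

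For the ``moreover'' part, if $S(\bar\mu,\bar\lambda)=\{(\bar y,\bar u)\}$, upper semicontinuity at a singleton already gives the set-convergence needed for continuity: given any open $W$ with $S(\bar\mu,\bar\lambda)\cap W\ne\emptyset$, i.e.\ $(\bar y,\bar u)\in W$, upper semicontinuity yields a neighborhood on which $S(\mu,\lambda)\subset W$, and $S(\mu,\lambda)\ne\emptyset$ by (i), hence $S(\mu,\lambda)\cap W\ne\emptyset$ — so lower semicontinuity holds too and $S$ is continuous at $(\bar\mu,\bar\lambda)$.

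The main obstacle I anticipate is the uniform boundedness of $\|u_n\|_U$ in part (ii): unlike in \cite{Son1,Son2}, the constraint \eqref{P3} does not bound $K(\lambda_n)$, so the bound cannot come from the feasible set and must be extracted from the coercivity of $J$ together with a careful construction of a \emph{uniformly cost-bounded} family of comparison controls feasible for the perturbed parameters $\lambda_n$ — the perturbation $\tilde u\mapsto \tilde u+(\bar\lambda^{(2)}-\lambda_n^{(2)})$ only converges in $L^2(\Gamma)$, not $L^\infty$, so some care is needed to keep $\int_\Gamma\varphi(\mu_n^{(2)})(u_n')^2\,d\sigma$ bounded and to verify \eqref{P3} a.e.; this is precisely the point the introduction flags as the reason the \cite{KY} approach needs developing.
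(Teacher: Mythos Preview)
Your plan for part~(i) and for the ``moreover'' clause is correct and matches the paper.

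For part~(ii) there are two issues. The minor one is the comparison-control construction: shifting $\tilde u$ by $\bar\lambda^{(2)}-\lambda_n^{(2)}$ preserves \eqref{P3} only if the state $\tilde y$ stays fixed, whereas keeping \eqref{P2} solved with that same state requires shifting by $\bar\lambda^{(1)}-\lambda_n^{(1)}$; these are different shifts, so one cannot keep $\tilde y$ unchanged and satisfy both. The paper instead builds the comparison pair by solving a Robin problem with $g$ absorbed into the boundary operator and then setting $u_n=-g(\cdot,y_n)-\lambda_n^{(2)}$ (equality in \eqref{P3}); alternatively, Lemma~\ref{lm3} supplies the needed approximants. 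You do flag this as the main obstacle, so this may just be imprecision in the plan.

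The genuine gap is the last line of your part~(ii): from $u_n\rightharpoonup\hat u$ you jump to ``$(y_n,u_n)\to(\hat y,\hat u)$ in $Y\times U$''. Weak convergence of $u_n$ does not place $(y_n,u_n)$ inside a norm-open set $W$, so the contradiction does not close. The paper devotes a separate lemma (Lemma~\ref{convergence}) to upgrading weak to strong convergence of $\bar u_n$, using the first-order optimality system: it writes the adjoint equations and the variational inequalities at $(\bar y_n,\bar u_n)$ and at $(\bar y,\bar u)$, adds the two inequalities, and uses $\varphi\ge\gamma>0$ to squeeze out $\gamma\|\bar u_n-\bar u\|_U^2\to 0$.

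It is worth noting that your sandwich argument actually opens a shorter route that avoids the adjoint machinery. Taking $(y,u)=(\hat y,\hat u)$ in your chain gives $J(y_n,u_n,\mu_n)\to J(\hat y,\hat u,\bar\mu)$; since $y_n\to\hat y$ in $C(\bar\Omega)$ and $\mu_n\to\bar\mu$ in $L^\infty$, dominated convergence via (A1) handles the $L$- and $\ell$-integrals, and (A3) lets one replace $\varphi(\mu_n^{(2)})$ by $\varphi(\bar\mu^{(2)})$ up to an $o(1)$ error, yielding $\int_\Gamma\varphi(\bar\mu^{(2)})u_n^2\,d\sigma\to\int_\Gamma\varphi(\bar\mu^{(2)})\hat u^2\,d\sigma$. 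As $\varphi(\bar\mu^{(2)})\ge\gamma>0$, this is convergence of an equivalent Hilbert norm on $L^2(\Gamma)$, which together with $u_n\rightharpoonup\hat u$ forces $u_n\to\hat u$ strongly. This is a legitimate alternative to the paper's Lemma~\ref{convergence}, but it has to be stated; as written, the plan simply asserts strong convergence without providing any mechanism for it.
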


At the end of this section, we would like give an example satisfying all assumptions (A1)-(A5).

\begin{example}
	{ \rm Let $\Omega$ be the open unit ball in $\mathbb R^2$ with boundary $\Gamma$. We consider the following problem:
		\begin{align*}
			J(y, u, \mu)=&\int_\Omega \left(y^4(x)-y^2(x)+\mu_1(x)\right)dx \\
			&+ \int_\Gamma \Big ( y^2(x)+y(x)|y(x)|+\sqrt{1+\mu^2_2(x)}u^2(x)\Big)d\sigma \to {\rm inf} \\
			\text{subject to}\quad
			&\begin{cases}
				-\Delta y +y + y^3 = 0\quad &{\rm in}\ \Omega,\\
				\partial_n y   =u+\lambda_1   \quad &{\rm on}\ \Gamma,
			\end{cases} \\
			& y(x)+u(x)+\lambda_2(x) \leq 0 \quad \text{a.a.} \ x\in \Gamma.
		\end{align*}
		Taking $\bar \mu \equiv 0$ and $\bar \lambda \equiv 0$, we have
		\begin{align*}
			& L(x,y,\mu_1)=y^4-y^2+\mu_1,& &\ell(x,y,\mu_2) = y^2-y|y|, \\
			&L_y(x,y,\mu_1)=4y^3-2y,& &\ell_y(x,y,\mu_2)= 2y+2|y|,  \\
			&f(x, y) = y^3,\ g(x,y)=y,& & f_y(x,y)= 3y^2,\ g_y(x,y)=1.
		\end{align*}
		The fact is that $J$ is convex with respect to $u$, but it is not convex with respect to $y$. Moreover, $J$ is not twice differentiable. Now let us show that all assumptions (A1)-(A5) are fulfilled.

At first, it is clear that the assumptions (A4) and (A5) hold.

For assumption (A1), direct calculations give the following estimates:
		\begin{align*}
			&|L(x,y,\mu_1)| \leq  |y|^4+ |y|^2+|\mu_1| \leq  M^4+M^2+\epsilon_0, \\
			&|\ell(x',y,\mu_2)|\leq 2|y|^2\leq 2M^2 ,\\
			&|L_y(x,y,\mu_1)|=|4y^3-2|y||\leq 4|y|^3+2|y|^2\leq 4M^3+2M, \\
			&|L_y(x,y_1,\mu_1)-L_y(x,y_2,\mu_1)|=|2(y_1-y_2)(2y^2_1+2y^2_2+2y_1y_2-1)|\leq 2(1+6M^2)|y_1-y_2|, \\
			&|\ell_y(x',y,\mu_2)| \leq 4|y|\leq 4M,  \\
			&|\ell_y(x', y_1,\mu_2)- \ell_y(x', y_2, \mu_2)|\leq 2|y_1-y_2|+2||y_1|-|y_2||\leq 4|y_1-y_2|
		\end{align*}
		for a.a. $x\in \Omega,\ x' \in \Gamma$, for all $\mu_1, \mu_2, y,y_1,y_2\in\mathbb R$ satisfying $|y|,|y_1|,|y_2|\leq M$ and $|\mu_1|+|\mu_2| \leq \epsilon_0$. All in all, we have claimed the assumption (A1).
		
		For the assumption (A2), we derive
		\begin{align*}
			&L(x, y, \mu_1)=(y^2-1)^2 +y^2 +\mu_1-1 \geq -\epsilon_0-1,\\
			&\ell(x',y,\mu_2)=(|y|+y)|y|\geq 0 
		\end{align*}
		for a.a. $x' \in \Gamma$, for all $\mu_1,\mu_2, y,u\in\mathbb R$ such that $|\mu_1|+|\mu_2|\leq \epsilon_0$. Thus, the assumption (A2) is indicated.
		
		For assumption (A3), we can compute
		\begin{align*}
			\varphi(t)-\varphi(s)&=|\sqrt{1+t^2}-\sqrt{1+s^2}|=\left|\frac{t^2 -s^2}{\sqrt{1+t^2}+\sqrt{1+s^2}}\right|\\
			&=\frac{|t +s|}{\sqrt{1+t^2}+\sqrt{1+s^2}}|t-s|\leq |t-s|
		\end{align*}
		for all $s,t\in \mathbb R$. Therefore, the assumption (A3) is satisfied. This completes our verification.}
		
\end{example}

\section{Auxiliary results} \label{Sec.3}
For any $\vartheta \in L^2(\Omega)$ and $\psi \in L^2(\Gamma)$, we say that $\xi \in H^1(\Omega)$ is a (weak) solution to the equation
\begin{align}
\begin{cases}
Ay  + h(x,y)= \vartheta  \quad &{\rm in}\ \Omega,\\
\partial_{n_A} y +k(x,y)= \psi \quad  &{\rm on}\ \Gamma,
\end{cases} \label{P5}
\end{align}
if it holds
\begin{multline*}
\int_{\Omega}\sum_{i,j=1}^N a_{ij}(x)D_i \xi (x)D_jv(x)dx+\int_{\Omega}a_0(x)\xi(x)v(x)dx+\int_{\Omega}h(x,\xi(x))v(x)dx\\
=\int_{\Omega}\vartheta(x)v(x)dx+\int_{\Gamma}(\psi(x)-k(x,\xi(x))) \tau v(x)d\sigma
\end{multline*}
for all $v \in H^1(\Omega)$, where $\tau: H^1(\Omega)\to L^2(\Gamma)$ is the trace operator (see, for instance, \cite{Troltzsch}). Both the existence and the regularity of solutions to the equation \eqref{P5} are established by the following lemma whose detailed proof could be found in \cite[Theorem 3.1]{Casas0} and \cite[Theorem 4.7]{Troltzsch}.
\begin{lemma} \label{lm1}
Assume that the assumption {\rm (A4)} is fulfilled and functions $h, k$ satisfy the assumption {\rm (A5)} formulated for $f,g$. Then, for each $\vartheta \in L^2(\Omega)$ and $\psi \in L^2(\Gamma)$ the equation \eqref{P5} has a unique solution $\xi \in H^1(\Omega)\cap C(\bar \Omega)$ and there exists a positive constant $C_0$ independent of $\vartheta$, $\psi$, $h$, $k$ such that
\begin{align*}
\Vert \xi\Vert_{H^1(\Omega)}+\Vert \xi \Vert_{C(\bar \Omega)} \leq C_0 (\|\vartheta\|_{L^2(\Omega)}+\Vert \psi \Vert_{L^2(\Gamma)}).
\end{align*}
Moreover, if $\psi_n $ converges weakly to $\psi$ in $L^2(\Gamma)$ then $\xi_{n}$ converges strongly to $\xi$ in $H^1(\Omega)\cap C(\bar \Omega)$ as $n \to \infty$ .
\end{lemma}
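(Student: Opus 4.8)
The plan is to prove Lemma~\ref{lm1} by assembling the three assertions---existence and uniqueness, the a priori estimate, and the convergence under weak perturbation of the boundary data---from the standard theory of semilinear elliptic Robin problems, citing \cite{Casas0,Troltzsch} for the parts that are already in the literature and supplying the monotonicity/continuity arguments for the rest. First I would fix $\vartheta\in L^2(\Omega)$ and $\psi\in L^2(\Gamma)$ and invoke the existence and regularity theorem for semilinear equations with Robin boundary conditions: under (A4) the bilinear form is bounded and coercive on $H^1(\Omega)$ (here one uses $a_0\ge 0$ with $a_0>0$ on a set of positive measure together with the uniform ellipticity $m_0$, so that a Poincar\'e-type inequality gives coercivity), and under (A5) the Nemytskii operators $y\mapsto h(\cdot,y)$ and $y\mapsto k(\cdot,y)$ are monotone (since $h_y,k_y\ge 0$) and locally Lipschitz; a monotone-operator (Browder--Minty) argument then yields a unique weak solution $\xi\in H^1(\Omega)$, and the $L^\infty$/continuity regularity $\xi\in C(\bar\Omega)$ in dimension two follows from Stampacchia truncation plus the $C^{1,1}$ regularity of $\Gamma$, exactly as in \cite[Theorem 3.1]{Casas0} and \cite[Theorem 4.7]{Troltzsch}.

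Next I would establish the a priori bound. Testing the weak formulation with $v=\xi$ and using $h(x,0)=0$, $k(x,0)=0$ together with monotonicity gives $\int_\Omega h(x,\xi)\xi\,dx\ge 0$ and $\int_\Gamma k(x,\xi)\tau\xi\,d\sigma\ge 0$, so the nonlinear terms have a favorable sign and can be dropped; coercivity of the form then yields $\|\xi\|_{H^1(\Omega)}\le C(\|\vartheta\|_{L^2(\Omega)}+\|\psi\|_{L^2(\Gamma)})$ with $C$ depending only on $m_0$, $\|a_{ij}\|_\infty$, $a_0$ and the trace constant, hence independent of $h,k,\vartheta,\psi$. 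The $C(\bar\Omega)$ part of the estimate comes from the linear $L^\infty$ bound in \cite{Casas0,Troltzsch} applied to the equation $Ay=\vartheta-h(x,\xi)$, $\partial_{n_A}y=\psi-k(x,\xi)$ viewed as a linear Robin problem with right-hand sides controlled, again with a constant independent of the data; since the nonlinear contributions have the correct sign one can absorb them and keep the clean bound stated in the lemma.

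For the convergence assertion, I would take $\psi_n\rightharpoonup\psi$ in $L^2(\Gamma)$ (with $\vartheta$ fixed), let $\xi_n$ be the corresponding solutions, and note that $\{\psi_n\}$ is bounded, so by the a priori estimate $\{\xi_n\}$ is bounded in $H^1(\Omega)\cap C(\bar\Omega)$; passing to a subsequence, $\xi_n\rightharpoonup\xi^*$ in $H^1(\Omega)$ and, by compactness of the embedding $H^1(\Omega)\hookrightarrow L^2(\Omega)$ and of the trace $H^1(\Omega)\to L^2(\Gamma)$ (both compact since $\Omega\subset\mathbb R^2$ is bounded with $C^{1,1}$ boundary), $\xi_n\to\xi^*$ strongly in $L^2(\Omega)$ and $\tau\xi_n\to\tau\xi^*$ in $L^2(\Gamma)$. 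Then I pass to the limit in the weak formulation: the bilinear form is weakly continuous, the terms $\int_\Omega h(x,\xi_n)v$ and $\int_\Gamma k(x,\xi_n)\tau v\,d\sigma$ converge by the continuity and local Lipschitz bounds in (A5) together with the strong $L^2$ convergence of $\xi_n$ (up to a further subsequence, pointwise a.e.\ convergence plus a dominating function), and $\int_\Gamma\psi_n\tau v\,d\sigma\to\int_\Gamma\psi\,\tau v\,d\sigma$ by weak convergence. Hence $\xi^*$ solves \eqref{P5}, so $\xi^*=\xi$ by uniqueness, and a standard subsequence argument upgrades this to convergence of the whole sequence. To get \emph{strong} convergence in $H^1(\Omega)$ I would test the equation for $\xi_n-\xi$ with $v=\xi_n-\xi$: the monotone nonlinear terms again have the right sign, and the remaining linear terms tend to zero because $\xi_n\rightharpoonup\xi$ and $\psi_n\rightharpoonup\psi$ while $\xi_n-\xi\to 0$ strongly in $L^2(\Omega)$ and on the trace, so coercivity forces $\|\xi_n-\xi\|_{H^1(\Omega)}\to 0$; strong convergence in $C(\bar\Omega)$ then follows from the uniform $L^\infty$-continuity estimate applied to $\xi_n-\xi$ regarded as the solution of a linear Robin problem with data $h(x,\xi_n)-h(x,\xi)\to 0$ in an appropriate norm and $\psi_n-\psi$, using the two-dimensional Sobolev embeddings to turn the $H^1$ convergence of the right-hand side into the desired sup-norm bound.

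The main obstacle will be the last point: obtaining \emph{strong} convergence in $C(\bar\Omega)$ rather than merely in $H^1(\Omega)$, since $\psi_n-\psi\to 0$ only weakly in $L^2(\Gamma)$ and a crude linear $L^\infty$ estimate in terms of $\|\psi_n-\psi\|_{L^2(\Gamma)}$ is not available. I expect this to require either exploiting the smoothing of the boundary-to-solution map (mapping $L^2(\Gamma)$ into $C(\bar\Omega)$ is compact in dimension two, so weak convergence of the data yields strong convergence of the solutions) or, equivalently, splitting off the linear Robin problem with data $(0,\psi_n-\psi)$ and using the compactness of its solution operator from $L^2(\Gamma)$ to $C(\bar\Omega)$; this compactness is precisely the content invoked in \cite[Theorem 4.7]{Troltzsch}, so the proof ultimately leans on that reference for the delicate regularity step.
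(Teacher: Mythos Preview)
The paper does not actually give its own proof of this lemma: it simply states that the detailed proof can be found in \cite[Theorem 3.1]{Casas0} and \cite[Theorem 4.7]{Troltzsch}. Your proposal invokes precisely the same two references and then sketches the standard monotone-operator/Stampacchia-truncation argument they contain, together with the compactness of the linear boundary-to-solution map $L^2(\Gamma)\to C(\bar\Omega)$ for the convergence assertion; this is correct and is exactly the content the paper defers to those references, so your approach coincides with the paper's (implicit) one.
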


In what follows, we will write $\phi_n \rightharpoonup \phi$ and $\phi_n \to \phi$ when $\phi_n$ converges weakly to $\phi$  and $\phi_n$ converges  strongly to $\phi$, respectively.

By Lemma \ref{lm1}, the equation \eqref{P2} has a unique solution $y \in H^1(\Omega)\cap C(\bar \Omega)$ for any $u,\lambda^{(1)} \in L^2(\Gamma)$. Additionally, there exists a constant $C_0>0$ which does not depend on $f, u, \lambda^{(1)}$ so that the following relation is true:
\begin{align*}
\Vert y\Vert_{H^1(\Omega)}+\Vert y \Vert_{C(\bar \Omega)} \leq C_0  \Vert u + \lambda^{(1)} \Vert_{L^2(\Gamma)}.
\end{align*}

The following properties of $K(\lambda)$ can be showed similarly to those found in \cite{Son1} with some minor modifications. For the convenience of the reader, however, we are going to provide some brief proofs here. 
\begin{lemma}\label{lm2}
Under the assumptions {\rm (A4)-(A5)}, $K(\lambda)$ is a nonempty and closed set for any $\lambda \in \Lambda$.
\end{lemma}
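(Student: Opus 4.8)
The plan is to establish the two assertions separately. For nonemptiness, the key observation is that the constraint \eqref{P3} can be satisfied by choosing $u$ sufficiently negative. Concretely, I would start from the trivial state: take $u_0 \equiv c$ for a constant $c$ to be determined, let $y_0 \in Y$ be the unique solution of \eqref{P2} guaranteed by Lemma \ref{lm1} (with $f,g$ playing the role of $h,k$ — here $k=0$ in \eqref{P2}, so this is the pure Robin case), and note the a priori bound $\|y_0\|_{C(\bar\Omega)} \le C_0\|c + \lambda^{(1)}\|_{L^2(\Gamma)}$. Since $g(\cdot,0)=0$ and $g_y$ is locally bounded (assumption (A5)), $|g(x',y_0(x'))| \le C_{gM}\|y_0\|_{C(\bar\Omega)}$ on $\Gamma$ with $M=\|y_0\|_{C(\bar\Omega)}$; combining these, $g(x',y_0)+u_0(x')+\lambda^{(2)} \le C_{gM}C_0\|c+\lambda^{(1)}\|_{L^2(\Gamma)} + c + \lambda^{(2)}$. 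The right-hand side tends to $-\infty$ as $c \to -\infty$ (the linear-in-$c$ term dominates the term growing like $|c|^{1/1}$ times a constant — one must check the growth is genuinely linear, which it is since the bound on $y_0$ is linear in $c$), so for $c$ negative enough the constraint holds a.e., giving $(y_0,u_0) \in K(\lambda)$.

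For closedness, I would take a sequence $(y_n,u_n) \in K(\lambda)$ with $(y_n,u_n) \to (y,u)$ strongly in $Y \times U$ and show $(y,u) \in K(\lambda)$. Strong convergence $u_n \to u$ in $U=L^2(\Gamma)$ implies (passing to a subsequence) $u_n \to u$ a.e.\ on $\Gamma$; strong convergence $y_n \to y$ in $C(\bar\Omega)$ gives uniform convergence, hence $y_n|_\Gamma \to y|_\Gamma$ uniformly. That each $y_n$ solves \eqref{P2} for datum $u_n+\lambda^{(1)}$ passes to the limit: either invoke the convergence part of Lemma \ref{lm1} (strong $u_n \to u$ certainly gives weak $u_n+\lambda^{(1)} \rightharpoonup u+\lambda^{(1)}$, so the solutions converge strongly to the solution for the limit datum, which by uniqueness must be $y$), or pass to the limit directly in the weak formulation using continuity of $f$ and the Carathéodory/growth hypotheses. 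For the inequality constraint \eqref{P3}, use continuity of $g$ in the second variable: $g(x',y_n(x')) \to g(x',y(x'))$ for a.a.\ $x'$, so $g(x',y(x'))+u(x')+\lambda^{(2)} = \lim_n \big(g(x',y_n(x'))+u_n(x')+\lambda^{(2)}\big) \le 0$ a.e., since a pointwise a.e.\ limit of nonpositive functions is nonpositive. Hence $(y,u) \in K(\lambda)$.

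The routine parts are the a priori estimates and the limit passages, all of which are standard given Lemma \ref{lm1} and assumptions (A4)–(A5). The one point deserving genuine care — and I expect it to be the only real obstacle — is the nonemptiness argument: one must be sure the state bound is \emph{linear} in the constant control $c$ (not, say, sublinear in a way that still lets $g$ grow, or superlinear — though the linear elliptic estimate of Lemma \ref{lm1} rules the latter out and the former only helps), so that $c + C_{gM}C_0|c|\,|\Gamma|^{1/2} \to -\infty$; if $C_{gM}C_0|\Gamma|^{1/2} \ge 1$ this could fail, so one should instead argue more carefully, e.g.\ by noting that for the \emph{specific} constant datum the solution $y_0$ is itself bounded by the maximum principle in a way controlled by the boundary data, or simply remark (as in \cite{Son1}) that one may first fix any admissible-candidate control and then it is the structure $f(\cdot,0)=0$, $g(\cdot,0)=0$ together with monotonicity that keeps $y_0$ small. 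Since the paper says this "can be showed similarly to those found in \cite{Son1}", I would keep this step brief and cite the analogous construction there.
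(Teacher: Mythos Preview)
Your closedness argument is correct and matches the paper's: pass to the limit in the weak formulation (or invoke Lemma~\ref{lm1}) and in the pointwise inequality, using continuity of $f,g$ and dominated convergence.

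Your nonemptiness argument, however, has a real gap that you partly sense but do not close. You set $M=\|y_0\|_{C(\bar\Omega)}$ and then bound $|g(x',y_0)|\le C_{gM}M$, but $M$ grows linearly in $|c|$ and assumption (A5) allows $C_{gM}$ to grow with $M$ (e.g.\ $g(x',y)=e^{y}-1$ satisfies (A5) with $C_{gM}=e^{M}$). Hence $C_{gM}M$ can dominate $|c|$ and the expression $C_{gM}M+c+\lambda^{(2)}$ need not tend to $-\infty$. Your parenthetical ``if $C_{gM}C_0|\Gamma|^{1/2}\ge 1$ this could fail'' still treats $C_{gM}$ as a fixed constant, which it is not. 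A repair via a comparison principle (showing $y_0$ is bounded \emph{above} independently of $c\le 0$, so that monotonicity of $g$ gives $g(x',y_0)$ bounded above) is possible but you do not carry it out.

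The paper avoids this difficulty entirely by a different, one-line construction: solve the auxiliary problem
\[
\begin{cases}
Ay+f(x,y)=0 & \text{in }\Omega,\\
\partial_{n_A}y+g(x,y)=\lambda^{(1)}-\lambda^{(2)} & \text{on }\Gamma,
\end{cases}
\]
which has a unique solution $y_0\in Y$ by Lemma~\ref{lm1} (with $h=f$, $k=g$), and set $u_0:=-g(\cdot,y_0)-\lambda^{(2)}$. Then $\partial_{n_A}y_0=u_0+\lambda^{(1)}$ and $g(x,y_0)+u_0+\lambda^{(2)}=0$, so $(y_0,u_0)\in K(\lambda)$ with the constraint active. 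No growth estimate on $g$ is needed at all; the trick is to absorb $g$ into the boundary operator rather than into the datum.
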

\begin{proof}
Taking any $\lambda=(\lambda^{(1)},\lambda^{(2)})\in \Lambda$, the following equation:
\begin{align*}
\begin{cases}
Ay  + f(x,y)= 0  \quad &{\rm in}\ \Omega,\\
\partial_{n_A} y +g(x,y)= \lambda^{(1)}-\lambda^{(2)} \quad  &{\rm on}\ \Gamma
\end{cases}
\end{align*}
 has a unique solution $y_0 \in Y$. Putting $u_0=-g(\cdot,y_0)-\lambda^{(2)}$, one obtains
$(y_0, u_0)\in K(\lambda)$. Hence $K(\lambda) \neq \emptyset$. 

Let $\{(y_n, u_n)\}$ be a sequence in $K(\lambda)$ such that $ (y_n, u_n) \to (y,u)$ in $Y \times U$ as $n \to \infty$. By the continuous property of $f$ and $g$ with respect to $y$ and the Lebesgue dominated convergence theorem, we have $z=(y,u) \in K(\lambda)$. Hence, the proof of Lemma \ref{lm2} is complete.
\end{proof}


\begin{lemma} \label{lm3}
Suppose that the assumptions {\rm (A4)-(A5)} are satisfied and $\{\lambda_n \}$ is a sequence such that $\lambda_n \to \hat \lambda$ in $\Lambda$. Then for any $(\hat y, \hat u) \in K(\hat \lambda)$, there exists a sequence $\{(y_n, u_n)\}$ with $(y_n, u_n)\in K(\lambda_n)$ fulfilling $(y_n, u_n) \to (\hat y, \hat u)$ in $Y \times U$.
\end{lemma}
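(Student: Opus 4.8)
The plan is to build an explicit recovery sequence that forces the state to stay close to $\hat y$ by exploiting the monotonicity in (A5) and a comparison principle. Put $\delta_n := \lambda_n^{(1)} - \hat\lambda^{(1)}$ and $\gamma_n := \lambda_n^{(2)} - \hat\lambda^{(2)}$, so that $\delta_n, \gamma_n \to 0$ in $L^2(\Gamma)$. I would define
$$ u_n := \hat u - |\delta_n| - |\gamma_n| \in U, $$
and let $y_n \in Y$ be the unique solution of \eqref{P2} corresponding to the control $u_n$ and the parameter component $\lambda_n^{(1)}$ (such a $y_n$ exists by Lemma \ref{lm1}). By construction $(y_n, u_n)$ satisfies \eqref{P2} and $u_n \to \hat u$ in $U$; what remains is to verify the mixed constraint \eqref{P3} and the convergence $y_n \to \hat y$.

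The heart of the argument is a comparison step: since $u_n + \lambda_n^{(1)} = (\hat u + \hat\lambda^{(1)}) + (\delta_n - |\delta_n| - |\gamma_n|) \le \hat u + \hat\lambda^{(1)}$ a.e. on $\Gamma$, I would show that $y_n \le \hat y$ on $\bar\Omega$. Setting $w := y_n - \hat y$ and writing $f(\cdot, y_n) - f(\cdot, \hat y) = c_n(\cdot)\, w$ with $c_n(x) = \int_0^1 f_y(x, \hat y(x) + t\,w(x))\, dt \ge 0$ by (A5), the function $w$ is a weak solution of $Aw + c_n w = 0$ in $\Omega$ with $\partial_{n_A} w \le 0$ on $\Gamma$. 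Testing the weak formulation with $v = w^+ \in H^1(\Omega)$ and using (A4) (the ellipticity bound $m_0$, together with $a_0 \ge 0$ and $c_n \ge 0$) gives $m_0\|\nabla w^+\|_{L^2(\Omega)}^2 + \int_\Omega a_0 (w^+)^2\, dx \le \int_\Gamma (\partial_{n_A} w)\,\tau(w^+)\, d\sigma \le 0$. Hence $\nabla w^+ = 0$, so $w^+$ is constant, and then $\int_\Omega a_0 (w^+)^2\, dx = 0$ together with the hypothesis in (A4) that $a_0 > 0$ on a set of positive measure forces $w^+ = 0$; thus $w \le 0$, i.e. $y_n \le \hat y$.

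Granting this, verifying \eqref{P3} is routine: from $y_n \le \hat y$ and $g_y \ge 0$ we get $g(\cdot, y_n) \le g(\cdot, \hat y)$ on $\Gamma$, and since $\lambda_n^{(2)} = \hat\lambda^{(2)} + \gamma_n \le \hat\lambda^{(2)} + |\gamma_n|$, $u_n = \hat u - |\delta_n| - |\gamma_n|$, and the admissibility of $(\hat y,\hat u)$ gives $g(\cdot, \hat y) + \hat u + \hat\lambda^{(2)} \le 0$, we obtain
$$ g(\cdot, y_n) + u_n + \lambda_n^{(2)} \le \big(g(\cdot, \hat y) + \hat u + \hat\lambda^{(2)}\big) - |\delta_n| \le 0 \quad \text{a.e. on } \Gamma, $$
so $(y_n, u_n) \in K(\lambda_n)$. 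For the convergence, $u_n \to \hat u$ in $U$ by construction, while the Neumann datum $u_n + \lambda_n^{(1)} = (\hat u + \hat\lambda^{(1)}) + (\delta_n - |\delta_n| - |\gamma_n|)$ converges strongly, hence weakly, to $\hat u + \hat\lambda^{(1)}$ in $L^2(\Gamma)$; Lemma \ref{lm1} then yields $y_n \to \hat y$ strongly in $H^1(\Omega)\cap C(\bar\Omega) = Y$. Therefore $(y_n, u_n) \to (\hat y, \hat u)$ in $Y \times U$, which is what is required.

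The main obstacle I anticipate is the comparison step $y_n \le \hat y$: one must carry it out carefully at the level of weak solutions (justifying $v = w^+ \in H^1(\Omega)$, the identity $\nabla w^+ = \mathbf 1_{\{w>0\}}\nabla w$, and $\tau(w^+) = (\tau w)^+ \ge 0$) and, crucially, use the part of (A4) asserting $a_0 > 0$ on a positive-measure set in order to rule out a nonzero constant $w^+$. Everything else — the choice of $u_n$, the constraint check, and the convergence via Lemma \ref{lm1} — is elementary bookkeeping with $L^2$-norms and the monotonicity of $g$.
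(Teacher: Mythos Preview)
Your argument is correct, but it is genuinely different from the paper's. The paper does not use a comparison principle at all: it defines $y_n$ as the solution of the \emph{modified} Robin problem
\[
Ay + f(\cdot,y)=0 \text{ in } \Omega,\qquad \partial_{n_A} y + g(\cdot,y) = \hat u + \hat\lambda^{(2)} + g(\cdot,\hat y) + \lambda_n^{(1)} - \lambda_n^{(2)} \text{ on } \Gamma,
\]
and then sets $u_n := \hat u + g(\cdot,\hat y) + \hat\lambda^{(2)} - g(\cdot,y_n) - \lambda_n^{(2)}$. With this choice one has the \emph{identity} $g(\cdot,y_n)+u_n+\lambda_n^{(2)} = g(\cdot,\hat y)+\hat u+\hat\lambda^{(2)} \le 0$, so \eqref{P3} holds automatically without any pointwise ordering of $y_n$ and $\hat y$; convergence $y_n\to\hat y$ then follows from the uniform estimate in Lemma~\ref{lm1} applied to the difference equation. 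Your route instead keeps the original state equation, lowers the Neumann datum pointwise via $u_n=\hat u-|\delta_n|-|\gamma_n|$, and then appeals to a weak maximum principle (testing with $w^+$ and using $a_0>0$ on a set of positive measure) to get $y_n\le \hat y$, from which monotonicity of $g$ closes the constraint. The trade--off: the paper's approach is purely ``algebraic'' and uses only Lemma~\ref{lm1}, while yours is more elementary in spirit but hinges on the comparison step, which requires the extra structural fact $a_0>0$ on a positive set and the standard (but to be justified) identities $\nabla w^+=\mathbf 1_{\{w>0\}}\nabla w$ and $\tau(w^+)=(\tau w)^+$. Since here $y_n,\hat y\in C(\bar\Omega)$, these justifications are routine, and your proof goes through.
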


\begin{proof} Let $\lambda_n=(\lambda^{(1)}_n, \lambda^{(2)}_n)\to \hat \lambda=(\hat \lambda^{(1)},\hat \lambda^{(2)})$ and fix $(\hat y, \hat u) \in K(\hat \lambda)$. 
	
	From the assumption $(A5)$ with $M=\|\hat y \|_{C(\bar \Omega)}$ and a Taylor expansion it follows that
$g(\cdot,\hat y)\in L^\infty(\Gamma)$ and $\hat u+ \hat \lambda^{(2)}+g(x,\hat y)+ \lambda^{(1)}_n-\lambda^{(2)}_n \in L^2(\Gamma)$. For this reason, the following equation possesses a unique solution:
\begin{align}\label{lm3e2}
\begin{cases}
Ay  + f(x,y)= 0  \quad &{\rm in}\ \Omega,\\
\partial_{n_A} y+ g(x,y) = \hat u+ \hat \lambda^{(2)}+g(x,\hat y)+ \lambda^{(1)}_n-\lambda^{(2)}_n \quad  &{\rm on}\ \Gamma.
\end{cases}
\end{align}
Let $y_n$ be a unique solution of the equation \eqref{lm3e2}. Setting $u_n=\hat u+g(x,\hat y)+\hat \lambda^{(2)}-g(x,y_n)-\lambda^{(2)}_n$ we can conclude $(y_n,u_n)\in K(\lambda_n)$. 

Denoting $\omega_n:=y_n-\hat y$ one derives 
\begin{align*}
\begin{cases}
A\omega_n  + f(x,y_n)-f(x,\hat y)= 0  \quad &{\rm in}\ \Omega,\\
\partial_{n_A} \omega_n+g(x,y_n)-g(x,\hat y) =  \lambda^{(1)}_n-\hat \lambda^{(1)}-\lambda^{(2)}_n+\hat \lambda^{(2)} \quad  &{\rm on}\ \Gamma.
\end{cases}
\end{align*}
Hence, there exist $ y_{n1} =\hat y+\theta_{1n} (y_n-\hat y)$ and
$\hat y_{n2}= \hat y+\theta_{2n} (y_n-\hat y)$ with $\theta_{1n},
\theta_{2n} \in (0,1)$ so that the above equation can be expressed again by
\begin{align*}
\begin{cases}
A\omega_n  + f_y(x, y_{n1})\omega_n= 0  \quad &{\rm in}\ \Omega,\\
\partial_{n_A} \omega_n +g_y(x,y_{n2})\omega_n=  \lambda^{(1)}_n-\hat \lambda^{(1)}-\lambda^{(2)}_n+\hat \lambda^{(2)} \quad  &{\rm on}\ \Gamma.
\end{cases}
\end{align*}
 Employing Lemma \ref{lm1} we may state that there is a constant $C_0>0$ independent of $f_y(x, y_{n1})$, $g_y(x, y_{n2})$, $\lambda_n$ and
$\hat \lambda$ such that the following estimate holds:
\begin{align*}
\Vert \omega_n \Vert_Y \leq C_0 \Vert \lambda^{(1)}_n-\hat \lambda^{(1)}-\lambda^{(2)}_n+\hat \lambda^{(2)} \Vert_{L^2(\Gamma)} 
\leq C_0 \Vert \lambda_n-\hat \lambda \Vert_\Lambda.
\end{align*} This means that $\Vert y_n-\hat y \Vert_Y \leq C_0 \Vert \lambda_n-\hat \lambda \Vert_\Lambda. $
Since $\lambda_n \to \hat \lambda$ in $\Lambda$, $y_n \to \hat y$ in
$Y$. Thanks to the assumption $(A5)$ and the Lebesgue dominated convergence theorem, we
obtain $g(x, y_n)\to g(x, \hat y)$ in $L^2(\Gamma)$ and so $u_n \to \hat u$ in $L^2(\Gamma)$. Conclusion, the proof of Lemma \ref{lm3} is complete.
\end{proof}


\section{Proof of the main result} \label{Sec.4}
Let us devote to the proof of Theorem \ref{Theo1} in this section. To get started, for each $(\mu, \lambda)\in \Pi \times \Lambda$ with $\|\mu -\bar \mu \|_{\Pi} +\|\lambda -\bar \lambda \|_\Lambda\leq \epsilon_0$ we denote by $V=V (\mu, \lambda)$ the optimal value function of problem $P(\mu,\lambda)$, that is,
$$V (\mu, \lambda)= \inf \limits_{(y,u)\in K(\lambda)} F(y,u,\mu). $$
At the begining, we are going to prove the first statement of Theorem \ref{Theo1}.\\

\noindent \textbf{(i) The non-emptiness of $S(\mu, \lambda)$}

Firstly, take any $(\mu, \lambda)\in \Pi \times \Lambda$ with $\|\mu -\bar \mu \|_{\Pi} +\|\lambda -\bar \lambda \|_\Lambda\leq \epsilon_0$. Applying Lemma \ref{lm2}, one deduces $K(\lambda)\neq \emptyset$. Then, we take $(y,u) \in K(\lambda)$ and put $M:=\| y \|_{C(\bar \Omega)}$. Because of the assumption (A1), there exist functions $r_{1M}\in L^1(\Omega), r_{2M} \in L^1(\Gamma)$ such that
\begin{align*}
|J(y,u,\mu)|\leq \| r_{1M} \|_{L^1(\Omega)}+\|r_{2M} \|_{L^1(\Gamma)}+k_{\rm max}\| u\|^2_{L^2(\Gamma)}<+\infty.
\end{align*}
where $k_{\rm max}$ is defined by
$$k_{\rm max}:=\max_{[\|\bar \mu\|_\Pi-\epsilon_0,\|\bar \mu\|_\Pi+\epsilon_0]}\varphi(t). $$
By the definition of $V$, there exists a sequence $\{ (y_n, u_n) \} \subset K(\lambda)$ so that it holds
\begin{align*}
V(\mu,\lambda)= \lim_{n \to \infty}J(y_n, u_n,\mu).
\end{align*}
Consequently, one has
$$
1+ V(\mu, \lambda) > J(y_n,u_n,\mu)
$$
for any $n\geq n_0$, where $n_0>0$ is a sufficiently large number. Combining this with the assumption (A2), we obtain
 $$
 1+ V(\mu, \lambda) > \int_\Omega r_{1}(x)dx+ \int_\Gamma \big(r_{2}(x)+\gamma u_n^2(x)\big)d\sigma, \quad \forall n\geq n_0,
 $$
which entails $\|u_n\|_{L^2(\Gamma)}\leq M_0$ for all $n\ge n_0$ with a suitable constant $M_0>0$. Due to the fact that $L^2(\Gamma)$ is reflexive, there is a subsequence $\{u_{n_k}\}$ satisfying $u_{n_k}\rightharpoonup \hat u$ in $L^2(\Gamma)$. Let us now denote by $\hat y$ the solution to the following equation:
\begin{align}\label{lm3e41}
	\begin{cases}
		Ay + f(x,y)= 0  \quad &{\rm in}\ \Omega,\\
		\partial_{n_A} y = \hat  u + \lambda^{(1)} \quad  &{\rm on}\ \Gamma.
	\end{cases}
\end{align}
Since $(y_{n_k}, u_{n_k}) \in K(\lambda)$, one has 
\begin{align*}
	\begin{cases}
		Ay_{n_k} + f(x,y_{n_k})= 0  \quad &{\rm in}\ \Omega,\\
		\partial_{n_A} y_{n_k} =   u_{n_k} + \lambda^{(1)} \quad  &{\rm on}\ \Gamma.
	\end{cases}
\end{align*}
On the one hand, noticing $u_{n_k}+\lambda^{(1)} \rightharpoonup \hat u +\lambda^{(1)}$ in $U$ it follows from Lemma \ref{lm1} that $y_{n_k}\to \hat y$ in $Y$. By the assumption (A5), we derive $g(\cdot,y_{n_k})\to g(\cdot,\hat y(x))$ in $L^2(\Gamma)$. Therefore $g(\cdot,y_{n_k})+u_{n_k}+\lambda^{(2)}\rightharpoonup g(\cdot,\hat y)+\hat u+\lambda^{(2)}$ in $L^2(\Gamma)$. On the other hand, the set $\{w \in L^2(\Gamma):\ w(x)\leq 0 \quad \text{a.a. } x\in \Gamma \}$ is weakly closed in $L^2(\Gamma)$ because it is convex and closed. As a result, one realizes
\begin{equation*}
g(x,\hat y)+\hat u+\lambda^{(2)}\leq 0 \text{ a.a. } x\in \Gamma
\end{equation*}
thanks to $g(x,y_{n_k})+u_{n_k}+\lambda^{(2)}\leq 0 \text{ a.a. } x\in \Gamma$. For this reason, we link the equation \eqref{lm3e41} to conclude $(\hat y, \hat u) \in K(\lambda)$. 

Besides, from the assumption (A3) and \cite[Theorem 3.23]{Dacorogna} one can claim that $J(\cdot,\cdot,\mu)$ is weakly lower semicontinuous for each $\mu \in \Pi$. Therefore, we arrive at
$$J(\hat y, \hat u, \mu) \leq \liminf \limits_{k \to +\infty} J(y_{n_k}, u_{n_k},\mu)=\lim \limits_{k \to +\infty} J(y_{n_k}, u_{n_k},\mu)=V(\mu,\lambda),$$
which implies $J(\hat y, \hat u, \mu)=V(\mu, \lambda)$. This means $(\hat y, \hat u)\in S(\mu, \lambda)$ to finish our proof.

Next, let us come back to prove the second statement of Theorem \ref{Theo1}.\\

\noindent \textbf{(ii) Upper semicontinuity of $S(\cdot, \cdot)$}

Suppose that the assertion (ii) of Theorem \ref{Theo1} is false. Then there exist open sets $V_1$ in $Y$, $V_2$ in $U$ and sequences $\{(\mu_n, \lambda_n)\} \subset \Pi \times \Lambda$, $\{(\bar y_n,\bar u_n)\} \subset Y\times U$ such that
\begin{align} \label{assum}
\begin{cases}
(\mu_n, \lambda_n) \to (\bar \mu,\bar \lambda), \quad \|\mu_n - \mu \|_\Pi +\|\lambda_n -\bar \lambda\|_\Lambda \leq \epsilon_0, \\
S(\bar \mu, \bar \lambda) \subset V_1\times V_2,  \\
(\bar y_n,\bar u_n) \in S(\mu_n, \lambda_n) \setminus V_1\times V_2, \ \forall n \geq 1.
\end{cases}
\end{align}
\textit{Our next strategy is as follows:} If we can show after choosing a subsequence that  $(\bar y_{n},\bar u_{n})\to (\bar y, \bar u)\in S(\bar \mu, \bar \lambda)$ in $Y\times U$ as $n \to +\infty$, then $(\bar y_{n}, \bar u_{n})\in W_1 \times W_2$ for $n$ large enough. This contradicts to \eqref{assum}. Hence, the proof can be done.

In order to demonstrate this, the following lemmas play a vital role in our verification.

\begin{lemma}\label{converge} There is a subsequence of $\{(\bar y_n,\bar u_n)\}$, also denoted by $\{(\bar y_n,\bar u_n)\}$, such that 
$$ \bar y_{n} \to \bar y \ \text{in}\ Y \quad \text{and}\quad \bar u_{n} \rightharpoonup \bar u\ \text{in} \ U$$
for some $(\bar y, \bar u)\in S(\bar \mu, \bar \lambda).$
\end{lemma}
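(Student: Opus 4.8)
The plan is to extract a weakly convergent subsequence of $\{\bar u_n\}$ by establishing a uniform bound in $U$, pass to the limit in the state equation and the constraint to obtain an admissible limit point, and finally use the weak lower semicontinuity of $J$ together with the coercivity assumption (A2) to upgrade the weak convergence of $\{\bar u_n\}$ to strong convergence and to show the limit is a global minimizer of $P(\bar\mu,\bar\lambda)$.

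First I would derive the uniform bound. Since $(\bar y_n,\bar u_n)\in S(\mu_n,\lambda_n)$, we have $J(\bar y_n,\bar u_n,\mu_n)=V(\mu_n,\lambda_n)$. Using Lemma \ref{lm3}: fix $(\tilde y,\tilde u)\in K(\bar\lambda)$ and construct $(\tilde y_n,\tilde u_n)\in K(\lambda_n)$ with $(\tilde y_n,\tilde u_n)\to(\tilde y,\tilde u)$ in $Y\times U$; then by (A1) the values $J(\tilde y_n,\tilde u_n,\mu_n)$ are bounded above (uniformly in $n$, since $\|\tilde y_n\|_{C(\bar\Omega)}$ is bounded, $\|\tilde u_n\|_U$ is bounded, $\varphi$ is bounded on the relevant compact interval, and the $r_{iM}$ terms are fixed). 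Hence $V(\mu_n,\lambda_n)$ is bounded above, so $J(\bar y_n,\bar u_n,\mu_n)\le C$ for all $n$. Combining this with (A2) and (A3), exactly as in the proof of part (i),
\[
C \ge J(\bar y_n,\bar u_n,\mu_n) \ge \int_\Omega r_1(x)\,dx + \int_\Gamma \big(r_2(x)+\gamma\,\bar u_n^2(x)\big)\,d\sigma,
\]
which yields $\|\bar u_n\|_U\le M_0$ for some constant $M_0$. By reflexivity of $U=L^2(\Gamma)$, pass to a subsequence with $\bar u_n\rightharpoonup\bar u$ in $U$. Since $\lambda_n^{(1)}\to\bar\lambda^{(1)}$ strongly, $\bar u_n+\lambda_n^{(1)}\rightharpoonup\bar u+\bar\lambda^{(1)}$ in $U$, so Lemma \ref{lm1} gives $\bar y_n\to\bar y$ strongly in $Y$, where $\bar y$ solves \eqref{P2} with control $\bar u$ and parameter $\bar\lambda^{(1)}$. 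For the constraint \eqref{P3}: (A5) and strong convergence $\bar y_n\to\bar y$ in $C(\bar\Omega)$ give $g(\cdot,\bar y_n)\to g(\cdot,\bar y)$ in $L^2(\Gamma)$, so $g(\cdot,\bar y_n)+\bar u_n+\lambda_n^{(2)}\rightharpoonup g(\cdot,\bar y)+\bar u+\bar\lambda^{(2)}$; since the cone $\{w\in L^2(\Gamma): w\le 0\ \text{a.e.}\}$ is convex and closed, hence weakly closed, the limit satisfies $g(\cdot,\bar y)+\bar u+\bar\lambda^{(2)}\le 0$ a.e. Thus $(\bar y,\bar u)\in K(\bar\lambda)$.

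It remains to show $(\bar y,\bar u)\in S(\bar\mu,\bar\lambda)$, i.e. $J(\bar y,\bar u,\bar\mu)\le V(\bar\mu,\bar\lambda)$, and to upgrade $\bar u_n\rightharpoonup\bar u$ to strong convergence. For optimality: take an arbitrary $(y',u')\in K(\bar\lambda)$; by Lemma \ref{lm3} there are $(y'_n,u'_n)\in K(\lambda_n)$ with $(y'_n,u'_n)\to(y',u')$ in $Y\times U$, so by optimality $J(\bar y_n,\bar u_n,\mu_n)\le J(y'_n,u'_n,\mu_n)$. The main obstacle is passing to the limit on both sides: on the right, I would argue $J(y'_n,u'_n,\mu_n)\to J(y',u',\bar\mu)$ using (A1) (continuity/boundedness of $L_y,\ell_y$ on bounded sets gives continuity of the integral terms along the strongly convergent sequence $(y'_n,u'_n)$, the mean value theorem controls the difference in $y$, and (A3) handles the $\varphi(\mu_n^{(2)})(u'_n)^2$ term via $|\varphi(\mu_n^{(2)})-\varphi(\bar\mu^{(2)})|\le k_\varphi|\mu_n^{(2)}-\bar\mu^{(2)}|^\theta$ together with $u'_n\to u'$ in $L^2$); on the left, weak lower semicontinuity of $J(\cdot,\cdot,\cdot)$ — which follows from (A2), (A3) and convexity of $J$ in $u$ as in part (i), here also needing that the integrand terms depending only on $y$ pass to the limit by strong convergence $\bar y_n\to\bar y$ and that $\mu_n\to\bar\mu$ — gives $J(\bar y,\bar u,\bar\mu)\le\liminf_n J(\bar y_n,\bar u_n,\mu_n)$. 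Hence $J(\bar y,\bar u,\bar\mu)\le J(y',u',\bar\mu)$ for all admissible $(y',u')$, so $(\bar y,\bar u)\in S(\bar\mu,\bar\lambda)$. Finally, applying this with $(y',u')=(\bar y,\bar u)$ forces $\liminf_n J(\bar y_n,\bar u_n,\mu_n)=J(\bar y,\bar u,\bar\mu)$; subtracting the terms that already converge (the $\Omega$-integral, the $\ell$-integral, and — using (A3) and $\bar y_n\to\bar y$ — absorbing the $y$-dependence), one isolates the control cost and concludes $\int_\Gamma\varphi(\bar\mu^{(2)})\bar u_n^2\,d\sigma\to\int_\Gamma\varphi(\bar\mu^{(2)})\bar u^2\,d\sigma$; since $\varphi(\bar\mu^{(2)})\ge\gamma>0$, this is equivalent to convergence of a weighted $L^2$-norm, which together with weak convergence yields $\bar u_n\to\bar u$ strongly in $U$ by the standard uniform-convexity (Radon–Riesz) argument. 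This completes the proof.
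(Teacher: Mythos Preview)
Your argument for the claims actually asserted in Lemma~\ref{converge} --- namely, extracting a subsequence with $\bar u_n\rightharpoonup\bar u$ in $U$, $\bar y_n\to\bar y$ in $Y$, and $(\bar y,\bar u)\in S(\bar\mu,\bar\lambda)$ --- is correct and essentially matches the paper's proof. The only cosmetic difference is in bounding $V(\mu_n,\lambda_n)$: you invoke Lemma~\ref{lm3} to produce comparison pairs $(\tilde y_n,\tilde u_n)\in K(\lambda_n)$, whereas the paper builds them by hand by solving $Ay+f(x,y)=0$, $\partial_{n_A}y+g(x,y)=\lambda_n^{(1)}-\lambda_n^{(2)}$ and setting $u_n=-g(\cdot,y_n)-\lambda_n^{(2)}$. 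Both routes give the same uniform upper bound on $J(\bar y_n,\bar u_n,\mu_n)$ and then the same coercivity estimate via (A2)--(A3).

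Where your proposal departs from the paper is the final paragraph: Lemma~\ref{converge} does \emph{not} ask for strong convergence of $\bar u_n$; that is the content of the subsequent Lemma~\ref{convergence}. You go further and prove strong convergence by a norm-convergence argument: from $J(\bar y_n,\bar u_n,\mu_n)\to J(\bar y,\bar u,\bar\mu)$ you subtract the terms that converge by dominated convergence (using $\bar y_n\to\bar y$ in $C(\bar\Omega)$ and $\mu_n\to\bar\mu$ in $\Pi$) together with the (A3) estimate $|\varphi(\mu_n^{(2)})-\varphi(\bar\mu^{(2)})|\le k_\varphi\|\mu_n^{(2)}-\bar\mu^{(2)}\|_{L^\infty}^\theta$, isolate $\int_\Gamma \varphi(\bar\mu^{(2)})\bar u_n^2\,d\sigma\to\int_\Gamma \varphi(\bar\mu^{(2)})\bar u^2\,d\sigma$, and conclude by Radon--Riesz in the equivalent weighted $L^2$-norm. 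This is a valid and considerably shorter alternative to the paper's Lemma~\ref{convergence}, which instead derives the adjoint equations \eqref{eq1}--\eqref{eq2} and the variational inequalities \eqref{eq5}--\eqref{eq6} from first-order optimality conditions, then shows $\langle\varphi(\bar\mu^{(2)})(\bar u_n-\bar u),\bar u_n-\bar u\rangle\to 0$ by estimating each remaining term. Your approach buys simplicity and avoids invoking the first-order necessary conditions (Lemma~2.4 of \cite{Son1}); the paper's approach, in contrast, makes the role of the optimality system explicit and would generalize more directly to settings where the cost is not a pure quadratic in $u$.
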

\begin{proof} In the first step, assume that $y_n$ be the solution of the following equation: 
\begin{align*} \begin{cases}
	Ay  + f(x,y)= 0  \quad &{\rm in}\ \Omega,\\
	\partial_{n_A} y+g(x,y) = \lambda_n^{(1)}-\lambda_n^{(2)} \quad  &{\rm on}\ \Gamma,
\end{cases}	
\end{align*}
and $u_n:=-g(x,y_n)-\lambda_n^{(2)}$. Using Lemma \ref{lm1}, we have 
\begin{align*}
\|y_n\|_Y &\leq C_0\|\lambda_n^{(1)}-\lambda_n^{(2)} \|_{L^2(\Gamma)}\\
&\leq C_0(\|\lambda_n^{(1)}-\bar \lambda_n^{(1)} \|_{L^2(\Gamma)}+\|\bar \lambda_n^{(1)}-\bar \lambda_n^{(2)} \|_{L^2(\Gamma)}+\|\bar \lambda_n^{(2)}-\lambda_n^{(2)} \|_{L^2(\Gamma)})\\
&\leq C_0(\|\lambda_n-\bar \lambda\|_{\Lambda}+\|\bar \lambda_n^{(1)}-\bar \lambda_n^{(2)} \|_{L^2(\Gamma)})\\
&\leq C_0(\epsilon_0+2\|\bar \lambda \|_\Lambda),
\end{align*}
which is equivalent to $\|y_n\|_Y\leq M_1$ for all $n$ with $M_1:=C_0(\epsilon_0+2\|\bar \lambda \|_\Lambda)$. Consequently, it follows from the assumption (A5) that 
\begin{align*}\|u_n\|_U = \|g(\cdot,y_n)+\lambda_n^{(2)}\|_U&\leq \|g(\cdot,y_n)-g(\cdot,0)\|_U+\|\lambda_n^{(2)}\|_U\\
&\leq C_{gM_1}\|y_n\|_{L^2(\Gamma)}+\|\lambda_n\|_\Lambda\\
&\leq C_{gM_1}\|y_n\|_Y|\Gamma|+\|\bar \lambda\|_\Lambda+\epsilon_0\\
&\leq C_{gM_1}M_1|\Gamma|+\|\bar \lambda\|_\Lambda+\epsilon_0,
\end{align*}
where $|\Gamma|$ is the volume of $\Gamma$. Hence, one achieves $\|u_n\|\leq M_2$ for all $n$ with $M_2:=C_{gM_1}M_1|\Gamma|+\|\bar \lambda\|_\Lambda+\epsilon_0$. Because of the assumption (A1), there exist $r_{1M_1}\in L^1(\Omega)$ and $r_{2M_1}\in L^1(\Gamma)$ such that 
\begin{align*}
	J(y_n, u_n,\mu_n) &\leq \|r_{1M_1}\|_{L^1(\Omega)}+\|r_{2M_1}\|_{L^1(\Gamma)}+k_{\rm max} \|u_n\|_U \\
	&\leq \|r_{1M_1}\|_{L^1(\Omega)}+\|r_{2M_1}\|_{L^1(\Gamma)}+k_{\rm max} M_2=:M_3, \quad \forall n.
	\end{align*}
Moreover, from the definitions of $y_n$ and $u_n$ we may conclude $(y_n,u_n) \in Y\times U$ and 
\begin{align*} 
	&\begin{cases}
		Ay_n  + f(x,y_n)= 0  \quad &{\rm in}\ \Omega,\\
		\partial_{n_A} y_n = u_n+\lambda_n^{(1)} \quad  &{\rm on}\ \Gamma,
	\end{cases}	\\
	&u_n+g(x,y_n)+\lambda_n^{(2)}=0. 
\end{align*}
Hence, we have $(y_n,u_n)\in K(\lambda_n)$ for all $n$. Combining this with the fact $(\bar y_n,\bar u_n)\in S(\mu_n,\lambda_n)$ yields  
$J(\bar y_n, \bar u_n,\mu_n) \leq J(y_n, u_n, \mu_n)$. For this reason, one gets
$$J(\bar y_n, \bar u_n,\mu_n) \leq M_3, \quad \forall n.$$
In other words, the sequence $\{V(\mu_n, \lambda_n)\}$ is bounded. Besides, the assumptions (A2) and (A3) imply
$$J(\bar y_n, \bar u_n,\mu_n)\geq \|r_1\|_{L^1(\Omega)}	+\|r_2\|_{L^1(\Gamma)}+\gamma \|\bar u_n\|^2_U, \quad \forall n.$$ 
Therefore, there exists a number $M_4>0$ such that $\|\bar u_n\|_U\leq M_4$ for all $n$. Since $U$ is reflexive, by passing a subsequence, we can assume $\bar u_{n}\rightharpoonup \bar u$ in $U$.

In the second step, let $\bar y\in Y$ be a unique solution to the following equation: 
\begin{align*}
	\begin{cases}
		Ay + f(x,y)= 0  \quad &{\rm in}\ \Omega,\\
		\partial_{n} y = \bar  u + \bar \lambda^{(1)} \quad  &{\rm on}\ \Gamma.
	\end{cases}
\end{align*}
Since $\lambda_n^{(1)}\to \bar \lambda^{(1)} $ in $U$, it is clear that $\bar  u_n + \lambda_n^{(1)}\rightharpoonup \bar  u + \bar \lambda^{(1)} $ in $U$. The employment of Lemma \ref{lm1} gives $\bar y_n \to \bar y$ in $Y$. 
It follows that $g(\cdot, \bar y_n)\to g(\cdot, \bar y)$ in $U$ and so 
$g(\cdot, \bar y_n)+\bar u_n+\lambda_n^{(2)}\rightharpoonup  g(\cdot, \bar y)+\bar u+\bar \lambda^{(2)}$ in $U$. Due to the facts $g(x, \bar y_n)+\bar u_n+\lambda_n^{(2)}\leq 0$ a.a. $x\in \Gamma$ and the set $\{w \in L^2(\Gamma):\ w(x)\leq 0 \quad \text{a.a. } x\in \Gamma \}$ is weakly closed in $U$, one finds
$$g(x, \bar y)+\bar u+\bar \lambda^{(2)}\leq 0 \text{ a.a. } x\in \Gamma.$$
Therefore, it holds $(\bar y, \bar u)\in K(\bar \lambda)$. 
    
Next, it remains to prove that $(\bar y, \bar u) \in S(\bar \mu, \bar \lambda)$. Indeed, taking any $(z, v) \in K(\bar \lambda)$, we only need to show that 
\begin{align}\label{sol_result}
J(\bar y, \bar u, \bar \mu) \leq J(z,v, \bar \mu).
\end{align}
On the one hand, by Lemma \ref{lm3} there is a sequence $\{(z_n, v_n)\} \subset K(\lambda_n)$ satisfying $z_n \to z$ and $v_n\to v$. Furthermore, from $(\bar y_n, \bar u_n) \in S(\mu_n,\lambda_n)$ we have
\begin{align}\label{sol1}
J(\bar y_n,\bar u_n, \mu_n)\leq J(z_n, v_n, \mu_n).
\end{align}
Theorem 3.23 in \cite{Dacorogna} shows that $J(\cdot,\cdot,\cdot)$ is lower semicontinuous at $(\bar y, \bar u, \bar \mu)$ and this implies that 
\begin{align*}
J(\bar y, \bar u, \bar \mu) \leq \liminf \limits_{n\to +\infty} J(y_n, u_n,\mu_n).
\end{align*}
From \eqref{sol1} it entails
\begin{align} \label{sol2}
J(\bar y, \bar u, \bar \mu) \leq \liminf \limits_{n\to +\infty} J(z_n, v_n,\mu_n).
\end{align}
On the other hand, one gets
$$J(z_n, v_n,\mu_n) = \int_{\Omega}L(x,z_n(x),\mu^{(1)}_n(x))dx+\int_{\Gamma}\big(\ell(x,z_n(x),\mu^{(2)}_n(x))+ \varphi (\mu^{(2)}_n(x))v^2_n(x)\big) d\sigma.$$
Since $(z_n, v_n)\to (z,v)$ in $C(\bar \Omega)\times L^2(\Gamma)$, there exist a constant $M_5>0$, a function $v_0 \in L^2(\Gamma)$ such that after choosing a subsequence, for every $n$ we derive
$$\|z_n\|_{C(\bar \Omega)}\leq M_5,\ |v_n(x)|\leq v_0(x) \quad \text{a.a.}\ x\in \Gamma.$$
By the continuity of $L(x,\cdot, \cdot)$ and $\ell(x,\cdot,\cdot,\cdot)$, we have
\begin{align*}
&L(x,z_n(x), \mu^{(1)}_n(x))\to L(x,z(x),\bar \mu^{(1)}(x)) \quad \text{a.a.} \ x\in \Omega,\\
&\ell(x', z_n(x'),\mu^{(2)}_n(x')) +\varphi(\mu^{(2)})v^2_n\to \ell(x', z(x'), \bar \mu^{(2)}(x')) +\varphi(\mu^{(2)})v(x') \quad \text{a.a.} \ x' \in \Gamma.
\end{align*}
Moreover, by the assumptions (A1) and (A3), there are functions $r_{1M}\in L^1(\Omega)$, $r_{2M}\in L^1(\Gamma)$ so that one can arrive at
\begin{align*}
&|L(x,z_n(x),\mu^{(1)}_n(x))| \leq r_{1M_5}(x) \quad \text{a.a.} \ x\in \Omega,\\
&|\ell(x',z_n(x'),\mu^{(2)}_n(x'))| \leq
r_{2M_5}(x'), \quad |\varphi(\mu^{(2)})v^2_n(x')|\leq k_{\rm max} v^2_0(x') \quad \text{a.a.} \ x' \in \Gamma,
\end{align*}
for all $n\geq 1$. Hence, applying the Lesbegue dominated convergence theorem leads to
\begin{align*}
\lim \limits_{n \to +\infty} J(z_n, v_n, \mu_n)&= \int_{\Omega}L(x,z(x),\bar \mu^{(1)}(x))dx+\int_{\Gamma}\big(\ell(x,z(x),\bar \mu^{(2)}(x))+\varphi(\bar \mu^{(2)})v^2(x) \big )d\sigma \\&=J(z,v,\bar \mu).
\end{align*}
 From this and \eqref{sol2}, we have shown \eqref{sol_result}. All in all, the proof of Lemma \ref{converge} is complete.
 \end{proof}

\begin{lemma} \label{convergence}
$\bar u_n \to \bar u$  in $U$.
\end{lemma}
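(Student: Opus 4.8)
The plan is to upgrade the weak convergence $\bar u_n \rightharpoonup \bar u$ furnished by Lemma \ref{converge} to strong convergence in $U$, exploiting the uniform convexity hidden in the boundary term $\int_\Gamma \varphi(\mu^{(2)})u^2\,d\sigma$ together with the lower bound $\varphi>\gamma>0$ from (A3). Throughout we keep the notation of Lemma \ref{converge}: $\bar y_n \to \bar y$ in $Y$, $\bar u_n \rightharpoonup \bar u$ in $U$, $(\bar y,\bar u)\in S(\bar\mu,\bar\lambda)\subset K(\bar\lambda)$, and we fix $M>0$ with $\|\bar y_n\|_{C(\bar\Omega)}\le M$ for all $n$ and $\|\bar u_n\|_U\le M_4$ as in that proof. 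The first task is to prove $J(\bar y_n,\bar u_n,\mu_n)\to J(\bar y,\bar u,\bar\mu)$. To this end, apply Lemma \ref{lm3} to the fixed pair $(\bar y,\bar u)\in K(\bar\lambda)$ and the sequence $\lambda_n\to\bar\lambda$: there is $(z_n,v_n)\in K(\lambda_n)$ with $z_n\to\bar y$ in $Y$ and $v_n\to\bar u$ in $U$. Optimality of $(\bar y_n,\bar u_n)$ for $P(\mu_n,\lambda_n)$ gives $J(\bar y_n,\bar u_n,\mu_n)\le J(z_n,v_n,\mu_n)$, and the Lebesgue-dominated-convergence computation already carried out in the proof of Lemma \ref{converge} shows $J(z_n,v_n,\mu_n)\to J(\bar y,\bar u,\bar\mu)$; hence $\limsup_n J(\bar y_n,\bar u_n,\mu_n)\le J(\bar y,\bar u,\bar\mu)$. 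The reverse inequality is the weak lower semicontinuity of $J$ (cf. \cite[Theorem 3.23]{Dacorogna}) along $\bar y_n\to\bar y$, $\bar u_n\rightharpoonup\bar u$, $\mu_n\to\bar\mu$, so $\lim_n J(\bar y_n,\bar u_n,\mu_n)=J(\bar y,\bar u,\bar\mu)$.

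Next we isolate the quadratic part. Since $\mu_n\to\bar\mu$ in $\Pi=L^\infty(\Omega)\times L^\infty(\Gamma)$, we have $\mu_n^{(1)}\to\bar\mu^{(1)}$ and $\mu_n^{(2)}\to\bar\mu^{(2)}$ pointwise a.e., while $|\mu_n^{(1)}(x)-\bar\mu^{(1)}(x)|+|\mu_n^{(2)}(x')-\bar\mu^{(2)}(x')|\le\epsilon_0$; together with $\bar y_n\to\bar y$ in $C(\bar\Omega)$ and the bounds $r_{1M}\in L^1(\Omega)$, $r_{2M}\in L^1(\Gamma)$ of (A1), dominated convergence yields $\int_\Omega L(x,\bar y_n,\mu_n^{(1)})\,dx\to\int_\Omega L(x,\bar y,\bar\mu^{(1)})\,dx$ and $\int_\Gamma \ell(x',\bar y_n,\mu_n^{(2)})\,d\sigma\to\int_\Gamma \ell(x',\bar y,\bar\mu^{(2)})\,d\sigma$. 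Subtracting these from $J(\bar y_n,\bar u_n,\mu_n)\to J(\bar y,\bar u,\bar\mu)$ gives $\int_\Gamma\varphi(\mu_n^{(2)})\bar u_n^2\,d\sigma\to\int_\Gamma\varphi(\bar\mu^{(2)})\bar u^2\,d\sigma$. Moreover, by (A3), $|\varphi(\mu_n^{(2)}(x'))-\varphi(\bar\mu^{(2)}(x'))|\le k_\varphi\|\mu_n^{(2)}-\bar\mu^{(2)}\|_{L^\infty(\Gamma)}^{\theta}$ for a.a.\ $x'$, so
\[
\Big|\int_\Gamma\big(\varphi(\mu_n^{(2)})-\varphi(\bar\mu^{(2)})\big)\bar u_n^2\,d\sigma\Big|\le k_\varphi\,\|\mu_n^{(2)}-\bar\mu^{(2)}\|_{L^\infty(\Gamma)}^{\theta}\,M_4^2\longrightarrow 0,
\]
whence $\int_\Gamma\varphi(\bar\mu^{(2)})\bar u_n^2\,d\sigma\to\int_\Gamma\varphi(\bar\mu^{(2)})\bar u^2\,d\sigma$.

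Finally we carry out the weak-to-strong upgrade. Write $w:=\varphi(\bar\mu^{(2)})$, which is bounded and satisfies $w\ge\gamma>0$ on $\Gamma$ by the continuity of $\varphi$ and $\bar\mu^{(2)}\in L^\infty(\Gamma)$. Then
\[
\int_\Gamma w\,(\bar u_n-\bar u)^2\,d\sigma=\int_\Gamma w\,\bar u_n^2\,d\sigma-2\int_\Gamma (w\,\bar u)\,\bar u_n\,d\sigma+\int_\Gamma w\,\bar u^2\,d\sigma .
\]
Since $w\,\bar u\in L^2(\Gamma)$ and $\bar u_n\rightharpoonup\bar u$ in $U$, the middle term tends to $2\int_\Gamma w\,\bar u^2\,d\sigma$, so by the previous step the right-hand side tends to $0$. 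Because $\gamma\|\bar u_n-\bar u\|_U^2\le\int_\Gamma w\,(\bar u_n-\bar u)^2\,d\sigma$, we conclude $\bar u_n\to\bar u$ in $U$. The only genuinely delicate points are the first task — producing the recovery sequence $(z_n,v_n)$ via Lemma \ref{lm3} and pinning down $\lim_n J(\bar y_n,\bar u_n,\mu_n)=J(\bar y,\bar u,\bar\mu)$ — and the bookkeeping for the perturbed weight $\varphi(\mu_n^{(2)})$, which is exactly where assumption (A3) is used; the remainder is the classical argument that weak convergence plus convergence of the (coercive, uniformly convex) quadratic term forces strong convergence.
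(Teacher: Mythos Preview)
Your proof is correct, but it follows a genuinely different route from the paper's argument. The paper invokes the first-order optimality system: by \cite[Lemma 2.4]{Son1} it introduces adjoint states $\bar\phi,\phi_n$ solving linearized elliptic problems and the associated variational inequalities
\[
\langle \bar\phi+2\varphi(\bar\mu^{(2)})\bar u,\ G(\bar y,\bar u,\bar\lambda)-G(\bar y_n,\bar u_n,\lambda_n)\rangle\le 0,\qquad
\langle \phi_n+2\varphi(\mu_n^{(2)})\bar u_n,\ G(\bar y_n,\bar u_n,\lambda_n)-G(\bar y,\bar u,\bar\lambda)\rangle\le 0,
\]
shows $\{\phi_n\}$ is bounded in $H^1(\Omega)$ (hence strongly precompact in $L^2$), sums the two inequalities, and passes to the limit in each cross term to isolate $\langle \varphi(\bar\mu^{(2)})(\bar u_n-\bar u),\bar u_n-\bar u\rangle\to 0$.

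You instead run a pure energy argument: sandwich $J(\bar y_n,\bar u_n,\mu_n)$ between the weak-lsc lower bound and the upper bound coming from the recovery sequence of Lemma~\ref{lm3}, conclude convergence of the optimal values, strip off the strongly convergent $L$- and $\ell$-pieces by dominated convergence, replace $\varphi(\mu_n^{(2)})$ by $\varphi(\bar\mu^{(2)})$ via the H\"older bound in (A3), and finish with the classical ``weak convergence $+$ convergence of the weighted norm $\Rightarrow$ strong convergence'' identity. This is more elementary and self-contained: it avoids the external optimality-conditions lemma from \cite{Son1} and uses nothing beyond what has already been established in the proof of Lemma~\ref{converge}. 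The paper's approach, on the other hand, does not rely on the cost splitting cleanly into a $u$-independent part and a pure quadratic in $u$; it would survive, for instance, an additional term in $\ell$ depending jointly on $(y,u)$, where your subtraction step would not go through so directly.
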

\begin{proof}
According to Lemma 2.4 in \cite{Son1}, since $(\bar y_n,\bar  u_n)\in S(\mu_n, \lambda_n)$ and $(\bar y, \bar u)\in S(\bar \mu,\bar \lambda)$, there exist function $\phi_n, \bar \phi \in H^1(\Omega)\cap C(\bar \Omega)$, and $e_n, e \in L^2(\Gamma)$ satisfying the following conditions:
\begin{align} \label{eq1}
\begin{cases}
A^*\bar \phi  + f_y(\cdot,\bar y)\bar \phi= L_y(\cdot, \bar y,\bar \mu^{(1)})  \quad &{\rm in}\ \Omega,\\
\partial_{n_{A^*}} \bar \phi+g_y(\cdot, \bar y)\bar \phi = \ell_y(\cdot,\bar y, \bar \mu^{(2)}) - 2\varphi(\bar \mu^{(2)})\bar u\, g_y(\cdot, \bar y)\quad  &{\rm on}\ \Gamma,
\end{cases}
\end{align}
\begin{align} \label{eq2}
\begin{cases}
A^*\phi_n  + f_y(\cdot,\bar y_n)\phi_n = L_y(\cdot,\bar y_n, \mu^{(1)}_n)  \quad &{\rm in}\ \Omega,\\
\partial_{n_{A^*}} \phi_n+g_y(\cdot,\bar y_n)\phi_n = \ell_y(\cdot,\bar y_n,\mu^{(2)}_n) - 2\varphi(\mu^{(2)}_n)\bar u_n\, g_y(\cdot,\bar  y_n)\quad  &{\rm on}\ \Gamma,
\end{cases}
\end{align}
and
\begin{align}
	&\langle \bar \phi+ 2\varphi(\bar \mu^{(2)})\bar u , G(\bar y,\bar u,\bar \lambda)-G(\bar y_n,\bar u_n,\lambda_n)\rangle  \leq 0,  \label{eq5} \\
	&\langle \phi_n+ 2\varphi(\mu^{(2)}_n)\bar u_n , G(\bar y_n,\bar u_n,\lambda_n)-G(\bar y,\bar u,\bar \lambda)\rangle  \leq 0,  \label{eq6}
\end{align}
where  $G(y,u,\lambda^{(2)}):=g(\cdot, y)+u+ \lambda^{(2) }$ and 
$$ \langle w_1, w_2\rangle:= \int_{\Gamma} w_1(x)w_2(x)d\sigma. $$
Our proof is separated into two steps as follows:

{\it Claim 1.}\quad There is a subsequence of $\{\phi_n \} $ converges strongly to $\phi_0$ in $L^2(\Omega)$ for some $\phi_0 \in H^1(\Omega)$. 

Indeed, since $\bar y_n \to \bar y$ in $Y$ and $\bar u_n \rightharpoonup \bar u$ in $U$, there is $M>0$ such that
\begin{align*}
\|\bar y\|_Y,\ \|\bar y_n\|_Y, \ \|\bar u\|_U,\  \|\bar u_n\|_U \leq M_6
\end{align*}
for all $n\geq 1$. By the assumptions (A1) and (A5), there exist functions $r_{3M_6} \in L^\infty(\Omega), \ r_{4M_6}\in L^\infty(\Gamma)$ and positive constants $C_{\ell M_6}, C_{gM_6}$ so that one gains the following estimates:
\begin{align}
&|L_y(x,\bar y_n, \mu^{(1)}_n)|\leq r_{3M_6}(x), \notag \\
&|\ell_y(x',\bar y_n,\mu^{(2)}_n)|\leq C_{\ell M_6}|\bar y_n|+r_{4M}(x'), \label{M1} \\
&|g_y(x', \bar y_n)| \leq C_{gM_6}, \notag
\end{align}
for all $n\geq 1$, $x\in \Omega$ and $x'\in \Gamma$. From \eqref{eq2} and \eqref{M1}, we apply Lemma \ref{lm1} to get
\begin{align*}
\|\phi_n\|_Y &\leq \|L_y(\cdot,\bar y_n, \mu^{(1)}_n)\|_{L^2(\Omega)}+\|\ell_y(\cdot,\bar y_n,\mu^{(2)}_n)-2\varphi(\mu^{(2)}_n)\bar u_n\,g_y(\cdot,\bar y_n)\|_{L^2(\Gamma)}\\
& \leq \|r_{3M_6}\|_{L^\infty(\Omega)}|\Omega|+(1+C_{gM_6})\Big((C_{\ell M_6}M_6+\|r_{4M_6}\|_{L^\infty(\Gamma)})|\Gamma|+2k_{\rm  max} \|\bar u_n\|_{L^2(\Gamma)}\Big)\\
& \leq \|r_{3M_6}\|_{L^\infty(\Omega)}|\Omega|+(1+C_{gM_6})\Big((C_{\ell M_6}M_6+\|r_{4M_6}\|_{L^\infty(\Gamma)})|\Gamma|+2k_{\rm max} M_6\Big) ,
\end{align*}
where $|\Omega|$ and $|\Gamma|$ are the volumes of $\Omega$ and
$\Gamma$, respectively. Thus, it follows that $\{\phi_n\}$ is bounded in
$H^1(\Omega)$. By passing a subsequence, we can assume that $\phi_n
\rightharpoonup \phi_0$ in $H^1(\Omega)$. Since the embedding
$H^1(\Omega) \hookrightarrow L^2(\Omega)$ is compact, it implies $\phi_n
\to \phi_0$ in $L^2(\Omega)$. Therefore, Claim 1 is proved.

{\it Claim 2.}\quad $\bar u_n \to \bar u$ in $U$.

Summing up \eqref{eq5} and \eqref{eq6} gives
\begin{align}
&\langle \phi_n-\bar \phi, G(\bar y_n,\bar u_n,\lambda_n)-G(\bar y,\bar u,\bar \lambda)\rangle \notag \\
&\quad + 2\langle \varphi(\mu^{(2)}_n)\bar u_n  - \varphi(\bar \mu^{(2)})\bar u, g(\cdot,\bar y_n)+\lambda^{(2)}_n)-g(\cdot, \bar y) -\bar \lambda^{(2)}  \rangle \notag \\
&\quad + 2\langle (\varphi(\mu^{(2)}_n)  - \varphi(\bar \mu^{(2)}))\bar u_n, \bar u_n-\bar u \rangle + 2\langle \varphi(\bar \mu^{(2)})(\bar u_n  - \bar u),\bar u_n-\bar u \rangle \leq 0. \label{eq7}
\end{align}
Since $\bar y_n \to \bar y$ in $Y$, $\lambda_n \to \bar  \lambda$ and $\bar u_n \rightharpoonup \bar u$ in $L^2(\Gamma)$,
we get $g(\cdot,\bar y_n)+\lambda^{(2)}_n \to g(\cdot, \bar y)+\bar \lambda^{(2)}$ and $G(\bar y_n,\bar u_n, \lambda_n)\rightharpoonup G(\bar y,\bar u,\bar \lambda)$ in $L^2(\Gamma)$. Moreover, one also has
\begin{align*}
	\| \varphi(\mu^{(2)}_n)\bar u_n  - \varphi(\bar \mu^{(2)})\bar u\|_U \leq \| \varphi(\mu^{(2)}_n)\bar u_n\|_U + \| \varphi(\bar \mu^{(2)})\bar u\|_U \leq 2k_{\rm max}M, 
\end{align*} and $ \phi_n - \bar \phi \to \phi_0-\bar \phi$ in $L^2(\Gamma)$. As a consequence, they hold
\begin{align}
&\lim_{n\to +\infty} \langle \phi_n-\bar \phi, G(\bar y_n,\bar u_n,\lambda_n)-G(\bar y,\bar u,\bar \lambda)\rangle =0 \label{term1},\\
&\lim_{n\to +\infty} \langle \varphi(\mu^{(2)}_n)\bar u_n  - \varphi(\bar \mu^{(2)})\bar u, g(\cdot, \bar y_n)+\lambda^{(2)}_n)-g(\cdot, \bar y) -\bar \lambda^{(2)}  \rangle  =0 \label{term2}. 
\end{align}
Moreover, it follows from the assumption (A3) that
\begin{align*}
|\langle (\varphi(\mu^{(2)}_n)  - \varphi(\bar \mu^{(2)}))\bar u_n, \bar u_n-\bar u \rangle|& \leq  \langle |\varphi(\mu^{(2)}_n)  - \varphi(\bar \mu^{(2)}|.|\bar u_n|, |\bar u_n-\bar u| \rangle \\
& \leq  \langle k_\varphi |\mu^{(2)}_n)  - \bar \mu^{(2)}|^\theta.|\bar u_n|, |\bar u_n-\bar u| \rangle \\
&\leq k_\varphi \|\mu^{(2)}_n)  - \bar \mu^{(2)}\|^\theta_{L^{\infty}(\Gamma)}. \|\bar u_n\|_U. \|\bar u_n-\bar u \|_U\\
&\leq 2M^2.k_\varphi \|\mu^{(2)}_n)  - \bar \mu^{(2)}\|^\theta_{L^{\infty}(\Gamma)}.  
\end{align*}
This implies that 
\begin{align}\label{term3}
	\lim_{n\to +\infty} \langle (\varphi(\mu^{(2)}_n)  - \varphi(\bar \mu^{(2)}))\bar u_n, \bar u_n-\bar u \rangle =0.
\end{align}
On the other hand, by the assumption (A3) we have
\begin{align} \label{term4}
	\langle \varphi(\bar \mu^{(2)})(\bar u_n  - \bar u), \bar u_n-\bar u \rangle = \int_\Gamma \varphi(\bar \mu^{(2)}(x))(\bar u_n(x)-\bar u(x))^2 d\sigma \geq \gamma \|\bar u_n-\bar u\|^2_U.	 
\end{align}
Collecting from \eqref{eq7} to \eqref{term4}, we may assert that 
\begin{align*} \label{eq8}
\limsup \limits_{n\to +\infty} \gamma \|\bar u_n-\bar u\|^2_U \leq \limsup \limits_{n\to +\infty} \langle \varphi(\bar \mu^{(2)})(\bar u_n  - \bar u), \bar u_n-\bar u \rangle \leq 0,
\end{align*}
which follows that $\bar u_n \to \bar u$ in $L^2(\Gamma)$. For this reason, Claim 2 is indicated. This completes the proof of Lemma \ref{convergence}.
\end{proof}

From Lemmas \ref{converge} and \ref{convergence}, we obtain the assertion (ii) of Theorem \ref{Theo1}. Finally, following a similar argument to the proof of \cite[Theorem 1.1]{Son1}, we obtain the continuity of $S(\cdot, \cdot)$ at $(\bar \mu,\bar \lambda)$ when $S(\bar \mu, \bar \lambda)$ is singleton. Summarizing, the proof of Theorem \ref{Theo1} is complete.

\section*{Acknowledgments}
 The research of the first author is partially funded by  Vietnam Institute for Advanced Study in Mathematics (VIASM). This research of the second author (Tuan Anh Dao) is funded (or partially funded) by the Simons Foundation Grant Targeted for Institute of Mathematics, Vietnam Academy of Science and Technology.

\end{document}